\newtheorem{theorem}{Theorem}[section]
\newtheorem{lemma}[theorem]{Lemma}
\newtheorem{definition}[theorem]{Definition}
\newtheorem{remark}[theorem]{Remark}
\newtheorem{example}[theorem]{Example}
\let\OLDthebibliography\thebibliography
\renewcommand\thebibliography[1]{
	\OLDthebibliography{#1}
	\setlength{\parskip}{0pt}
	\setlength{\itemsep}{0pt plus 0.3ex}
}
\begin{document}
	\title{ROM-based multiobjective optimization of elliptic PDEs via numerical continuation}
	\author[1]{Stefan Banholzer}
	\author[2]{Bennet Gebken}
	\author[2]{Michael Dellnitz}
	\author[2]{Sebastian Peitz}
	\author[1]{Stefan Volkwein}
	\affil[1]{\normalsize Department of Mathematics and Statistics, University of Konstanz, Germany}
	\affil[2]{\normalsize Department of Mathematics, Paderborn University, Germany}

	\maketitle

	\begin{abstract}
	Multiobjective optimization plays an increasingly important role in modern applications, where several objectives are often of equal importance. The task in multiobjective optimization and multiobjective optimal control is therefore to compute the {set of optimal compromises} (the \emph{Pareto set}) between the conflicting objectives. 
	Since the Pareto set generally consists of an infinite number of solutions, the computational effort can quickly become challenging which is particularly problematic when the objectives are costly to evaluate as is the case for models governed by partial differential equations (PDEs). To decrease the numerical effort to an affordable amount, surrogate models can be used to replace the expensive PDE evaluations.
	Existing multiobjective optimization methods using model reduction are limited either to low parameter dimensions or to few (ideally two) objectives. In this article, we present a combination of the reduced basis model reduction method with a continuation approach using inexact gradients. The resulting approach can handle an arbitrary number of objectives while yielding a significant reduction in computing time.
	\end{abstract}

	\section{Introduction}
	The dilemma of deciding between multiple, equally important goals is present in almost all areas of engineering and economy. A prominent example comes from production, where we want to produce a product at minimal cost while simultaneously preserving a high quality. In the same manner, multiple goals are present in most technical applications, maximizing the velocity while minimizing the energy consumption of electric vehicles \cite{PSOB17} being only one of many examples. These conflicting goals result in \emph{multiobjective optimization problems} (MOPs) \cite{Ehr05}, where we want to optimize all objectives simultaneously. Since the objectives are in general contradictory, there exists an infinite number of \emph{optimal compromises}. The set of these compromise solutions is called the \emph{Pareto set}, and the goal in multiobjective optimization is to approximate this set in an efficient manner, which is significantly more expensive than solving a single objective problem. Due to this, the development of efficient numerical approximation methods is an active area of research, and methods range from scalarization \cite{Ehr05,H2001} over set-oriented approaches \cite{DSH2005} and continuation \cite{H2001} to evolutionary algorithms \cite{CLV07}. Recent advances have paved the way to new challenging application areas for multiobjective optimization such as feedback control or problems constrained by partial differential equations (PDEs); cf.~\cite{PD18b} for a survey. 
	
	In the presence of PDE constraints, the computational effort can quickly become infeasible such that special means have to be taken in order to accelerate the computation.
	To this end, surrogate models form a promising approach for significantly reducing the computational effort. A widely used approach is to directly construct a mapping from the parameter to the objective space using as few function evaluations of the expensive model as possible, cf.~\cite{THH+15,CSHM17} for extensive reviews. In the case of PDE constraints, an alternative approach is via dimension reduction techniques such as Proper Orthogonal Decomposition (POD) \cite{Sir87,KV01} or the reduced basis (RB) method \cite{GP05}. In these methods, a small number of high-fidelity solutions is used to construct a low-dimensional surrogate model for the PDE which can be evaluated significantly faster while guaranteeing convergence using error estimates. In recent years, several methods have been proposed where model reduction is used in multiobjective optimization and optimal control. In \cite{IUV17} and \cite{ITV16}, scalarization using the so-called weighted sum method was combined with RB and POD, respectively. In \cite{BBV16,BBV17}, convex problems were solved using reference point scalarization and POD, and set-oriented approaches were used in \cite{BDPV18,BDPV17}. A comparison of both was performed in \cite{POBD19} for the Navier--Stokes equations.
	
	In this article we combine an extension of the continuation methods presented in \cite{H2001,SDD2005} to inexact gradients (Section \ref{p5sec:ContinuationMethodWithInexactness}) with a reduced basis approach for elliptic PDEs (Section~\ref{p5sec:MultiobjectiveParameterOptimizationWithRB}). To deal with the error introduced by the RB approach, we combine the KKT conditions for MOPs with error estimates for the RB method to obtain a tight superset of the Pareto set. For the example considered here, the proposed method yields a speed-up factor of approximately 63 compared to the direct solution of the expensive problem (Section~\ref{p5sec:NumericalResults}). Additionally, our approach allows us to control the quality of the result by controlling the errors for each objective function individually.

\section{A continuation method for MOPs with inexact objective gradients} \label{p5sec:ContinuationMethodWithInexactness}
	In this section, we will begin by briefly introducing the basic concepts of multiobjective optimization upon which we will build in this article (see \cite{Ehr05,H2001} for detailed introductions). Afterwards, we will discuss the continuation method for MOPs and present two modifications of it that can deal with inexact gradient information. 	
	
	\subsection{Multiobjective optimization} \label{p5sec:MultiobjectiveOptimization}
	The goal of multiobjective optimization is to minimize several conflicting criteria at the same time. In other words, we want to minimize an objective $J = (J_1,...,J_k) : \mathbb{R}^n \rightarrow \mathbb{R}^k$ that is vector valued. It maps the \emph{variable space} $\mathbb{R}^n$ to the \emph{image space} $\mathbb{R}^k$. In contrast to single-objective optimization (i.e., $k = 1$), there exists no natural total order of the image space $\mathbb{R}^k$ for $k > 1$. As a result, the classical concept of \emph{optimality} has to be generalized:
	
	\begin{definition} \label{p5def:Pareto_optimal}
		\begin{itemize}
			\item[(a)] $\bar{u} \in \mathbb{R}^n$ is called \emph{(globally) Pareto optimal} if there is no other point $u \in \mathbb{R}^n$ such that $J_i(u) \leq J_i(\bar{u})$ for all $i \in \{1,...,k\}$ and $J_j(u) < J_j(\bar{u})$ for some $j \in \{1,...,k\}$. 
			\item[(b)] The set $P$ of all Pareto optimal points is called the \emph{Pareto set}. Its image under $J$ is the \emph{Pareto front}.
		\end{itemize}
	\end{definition}
	
	The Pareto set is the solution of the \emph{multiobjective optimization problem (MOP)}
	\begin{equation} \label{p5eq:MOP}
		\min_{u \in \mathbb{R}^n} J(u). \tag{MOP}
	\end{equation}
	Constrained MOPs can be formulated analogously by restricting $u$ in Definition \ref{p5def:Pareto_optimal} to a subset $U \subseteq \mathbb{R}^n$. Similar to the scalar-valued case, if $J$ is differentiable, we can use the derivative of $J$ to obtain necessary conditions for Pareto optimality, the \emph{Karush-Kuhn-Tucker (KKT) conditions} \cite{H2001}:
	\begin{theorem}
		Let $\bar{u}$ be a Pareto optimal point of \eqref{p5eq:MOP}. Then there exist multipliers
		\begin{equation*}
			\alpha \in \Delta_k := \left\{ \alpha \in (\mathbb{R}^{\geq 0})^k : \sum_{i = 1}^k \alpha_i = 1 \right\}
		\end{equation*}
		such that
		\begin{equation} \label{p5eq:KKT} \tag{KKT}
			DJ(\bar{u})^\top \alpha = \sum_{i = 1}^k \alpha_i \nabla J_i(\bar{u}) = 0.
		\end{equation}
	\end{theorem}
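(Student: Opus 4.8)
The plan is to split the argument into two stages: first extract a first-order consequence of Pareto optimality, stated as the nonexistence of a common descent direction, and then apply a separation argument to this obstruction in order to produce the multipliers $\alpha \in \Delta_k$.

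First I would show that Pareto optimality of $\bar u$ forbids any direction $v \in \mathbb{R}^n$ with $\nabla J_i(\bar u)^\top v < 0$ for \emph{all} $i \in \{1,\ldots,k\}$ simultaneously. Indeed, if such a $v$ existed, then differentiability of each $J_i$ gives the expansion $J_i(\bar u + t v) = J_i(\bar u) + t\,\nabla J_i(\bar u)^\top v + o(t)$, so for each fixed $i$ the right-hand side is strictly less than $J_i(\bar u)$ for all sufficiently small $t > 0$. Since there are only finitely many objectives, one can choose a single $t > 0$ that works for all $i$ at once, yielding a point $\bar u + t v$ that strictly improves every objective and hence contradicts Definition \ref{p5def:Pareto_optimal}(a).

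Next I would translate this nonexistence into the desired algebraic condition via strict separation. Consider the convex hull $C := \mathrm{conv}\{\nabla J_1(\bar u),\ldots,\nabla J_k(\bar u)\} \subseteq \mathbb{R}^n$, which is compact and convex as the convex hull of finitely many points; note that $0 \in C$ is exactly the assertion that there exist coefficients $\alpha \in \Delta_k$ with $\sum_{i=1}^k \alpha_i \nabla J_i(\bar u) = 0$. Suppose for contradiction that $0 \notin C$. Since $C$ is compact and $\{0\}$ is closed and disjoint from it, the separating hyperplane theorem furnishes a vector $v \in \mathbb{R}^n$ with $v^\top x > 0$ for every $x \in C$, and in particular $\nabla J_i(\bar u)^\top v > 0$ for each $i$. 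The direction $-v$ then satisfies $\nabla J_i(\bar u)^\top (-v) < 0$ for all $i$, contradicting the first step. Hence $0 \in C$, which is precisely \eqref{p5eq:KKT}.

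The only genuinely substantive step is the passage from ``no common descent direction'' to the existence of the multipliers; everything else is the chain rule together with a finiteness argument. I expect the main obstacle to be invoking the separation cleanly — one must use \emph{strict} separation (available because $C$ is compact and disjoint from the closed set $\{0\}$) in order to obtain the strict inequalities $\nabla J_i(\bar u)^\top v > 0$ that feed back into the descent-direction contradiction, rather than the weak inequalities from ordinary separation. Equivalently, this step can be replaced by a direct appeal to Gordan's theorem of the alternative applied to the Jacobian $DJ(\bar u)$, which packages exactly this dichotomy and then only requires normalizing the resulting nonnegative, nonzero multiplier vector to lie in $\Delta_k$.
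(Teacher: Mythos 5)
Your proof is correct. Note that the paper itself does not prove this theorem --- it is stated as a known result with a citation to Hillermeier \cite{H2001} --- so there is no in-paper argument to compare against; your two-stage argument (Pareto optimality excludes a common strict descent direction, then strict separation of $0$ from the compact convex hull of the gradients, i.e.\ Gordan's theorem of the alternative) is precisely the classical proof found in that reference and in standard texts such as \cite{Ehr05}. The details check out: the finiteness of $k$ does let you pick a single $t>0$ uniformly, a strictly improving point does violate Definition \ref{p5def:Pareto_optimal}(a), and since $\{0\}$ is compact and disjoint from the closed convex set $C$, strict separation indeed yields $v^\top x>0$ for all $x\in C$ (e.g.\ by taking $v$ to be the projection of $0$ onto $C$), which closes the contradiction.
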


	For $k = 1$, this reduces to the well-known optimality condition $\nabla J(\bar{u}) = 0$. If $J$ is non-convex, then the points satisfying \eqref{p5eq:KKT} form a proper superset of the Pareto set $P$:
	\begin{definition}
		If $\bar{u} \in \mathbb{R}^n$ and $\bar{\alpha} \in \Delta_k$ satisfy \eqref{p5eq:KKT}, then $\bar{u}$ is called \emph{Pareto critical} with corresponding \emph{KKT vector} $\bar{\alpha}$, containing the \emph{KKT multipliers} $\bar{\alpha}_i$, $i \in \{1,...,k\}$. The set $P_c$ of all Pareto critical points is called the \emph{Pareto critical set}.
	\end{definition}
	When solving an MOP, an initial step can be to compute the Pareto critical set. This set possesses additional structure which can be exploited in numerical schemes. Introducing the function
	\begin{equation*}
		F : \mathbb{R}^n \times (\mathbb{R}^{>0})^k \rightarrow \mathbb{R}^{n+1},
		(u,\alpha) \mapsto 
		\begin{pmatrix}
			\sum_{i = 1}^k \alpha_i \nabla J_i(u) \\
			1 - \sum_{i = 1}^k \alpha_i 		
		\end{pmatrix},
	\end{equation*}
	we see that Pareto critical points and their corresponding KKT vectors can be described as the zero level set of $F$. As shown by Hillermeier \cite{H2001}, this has the following implication:
	
	\begin{theorem} \label{p5thm:hillermeier}
		Let $J$ be twice continuously differentiable.
		\begin{itemize}
			\item[(a)] Let $\mathcal{M} := \{ (u,\alpha) \in \mathbb{R}^n \times (\mathbb{R}^{>0})^k : F(u,\alpha) = 0 \}$. If the Jacobian of $F$ has full rank everywhere, i.e.,
			\begin{equation} \label{p5eq:rank_DF}
				rk(DF(u,\alpha)) = n + 1 \quad \forall (u,\alpha) \in \mathcal{M},
			\end{equation}
			then $\mathcal{M}$ is a $(k-1)$-dimensional differentiable submanifold of $\mathbb{R}^{n+k}$. The tangent space of $\mathcal{M}$ at $(u,\alpha)$ is given by
			\begin{equation*}
				T_{(u,\alpha)} \mathcal{M} = ker(DF(u,\alpha)).
			\end{equation*}
			\item[(b)] Let $(u,\alpha) \in \mathcal{M}$ such that \eqref{p5eq:rank_DF} holds in $(u,\alpha)$. Then there is an open set $U \subseteq \mathbb{R}^n \times \mathbb{R}^k$ with $(u,\alpha) \in U$ such that $\mathcal{M} \cap U$ is a manifold as in \emph{(a)}. In other words, $\mathcal{M}$ locally possesses a manifold structure in all points satisfying \eqref{p5eq:rank_DF}.
		\end{itemize}
	\end{theorem}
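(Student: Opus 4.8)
The plan is to recognize Theorem \ref{p5thm:hillermeier} as a direct application of the regular value theorem (equivalently, the submersion form of the implicit function theorem) from differential geometry. The domain $\mathbb{R}^n \times (\mathbb{R}^{>0})^k$ is an open subset of $\mathbb{R}^{n+k}$, the codomain is $\mathbb{R}^{n+1}$, and hypothesis \eqref{p5eq:rank_DF} states precisely that $DF(u,\alpha)$ is surjective at every point of $\mathcal{M} = F^{-1}(0)$, i.e. that $0$ is a regular value of $F$. The asserted dimension is then forced by the index formula $(n+k) - (n+1) = k-1$.

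For part (a), I would first settle the regularity bookkeeping: since $J$ is twice continuously differentiable, each $\nabla J_i$ is $C^1$, so the top block $(u,\alpha) \mapsto \sum_{i=1}^k \alpha_i \nabla J_i(u)$ is $C^1$ (a finite sum of products of the $C^1$ maps $\nabla J_i$ with the linear coordinate functions $\alpha_i$), while the last component $1 - \sum_i \alpha_i$ is affine; hence $F$ is $C^1$. With $0$ a regular value, the regular value theorem yields that $\mathcal{M}$ is a $C^1$ (i.e. differentiable) submanifold of the open set $\mathbb{R}^n \times (\mathbb{R}^{>0})^k \subseteq \mathbb{R}^{n+k}$ of dimension $k-1$. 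For the tangent space, I would use the standard two-step argument: differentiating $F \circ \gamma \equiv 0$ for an arbitrary $C^1$ curve $\gamma$ in $\mathcal{M}$ through $(u,\alpha)$ gives $DF(u,\alpha)\,\dot\gamma(0) = 0$, so $T_{(u,\alpha)}\mathcal{M} \subseteq \ker DF(u,\alpha)$; since both spaces have dimension $(n+k)-(n+1) = k-1$ (the kernel by rank--nullity, using the full-rank hypothesis), the inclusion is an equality.

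For part (b), the point is that full rank is an open condition. If \eqref{p5eq:rank_DF} holds at a single $(u,\alpha) \in \mathcal{M}$, then some $(n+1)\times(n+1)$ minor of $DF$ is nonzero there; by continuity of the entries of $DF$ (again guaranteed by $J$ being $C^2$) this minor stays nonzero on an open neighborhood $U \subseteq \mathbb{R}^n \times \mathbb{R}^k$. Thus $0$ is a regular value of the restriction $F|_U$, and applying part (a) to $F|_U$ shows that $\mathcal{M} \cap U = (F|_U)^{-1}(0)$ carries the manifold structure of (a). Equivalently, one may invoke the implicit function theorem directly at $(u,\alpha)$, solving $F = 0$ for a suitable choice of $n+1$ coordinates as a $C^1$ function of the remaining $k-1$.

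I do not expect a genuine obstacle here, as the result is a textbook consequence of the regular value theorem; the only points requiring care are the smoothness bookkeeping (the $C^2$ assumption on $J$ is exactly what makes $F$ of class $C^1$, so that $\mathcal{M}$ is differentiable rather than smoother) and the observation that the constraint $\alpha \in (\mathbb{R}^{>0})^k$ is an open condition, so that the domain is an open subset of $\mathbb{R}^{n+k}$ and no boundary subtleties arise.
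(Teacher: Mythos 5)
Your proof is correct, and it follows exactly the standard route: the paper itself offers no proof of this theorem but merely cites Hillermeier \cite{H2001}, whose argument is precisely this application of the regular value theorem (with the tangent-space identification by the curve/dimension-count argument and part (b) by openness of the full-rank condition). The smoothness bookkeeping ($J$ being $C^2$ makes $F$ of class $C^1$) and the observation that $(\mathbb{R}^{>0})^k$ is open, so no boundary issues arise, are handled correctly.
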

		
	Theorem \ref{p5thm:hillermeier} forms the basis for the continuation method we use in this article.
	
	\subsection{Continuation method with exact gradients}
	\label{p5subsec:ExactContinuation}
	We only give a brief description of the method here and refer to \cite{SDD2005} and \cite{H2001} for details. By Theorem \ref{p5thm:hillermeier}, the Pareto critical set is -- except for the boundary -- the projection of the differentiable manifold $\mathcal{M} \subseteq \mathbb{R}^n \times \mathbb{R}^k$ onto its first $n$ components. In \cite{GPD2019} it has been shown that generically, this also holds for the first-order approximations, i.e., the projection of the tangent space of $\mathcal{M}$ yields the tangent cone of $P_c$. Given a Pareto critical point $\bar{u} \in P_c$, this means that we can find first-order candidates for new Pareto critical points in the vicinity of $\bar{u}$ by moving in the projected tangent space of $\mathcal{M}$. The idea of the continuation method is to do this iteratively to explore the entire Pareto critical set.
	
	Instead of approximating $P_c$ by a set of points, we use a set-oriented numerical approach; cf.~\cite{SDD2005} for details. This has the key advantage that it is easy to check whether a certain part of the set has already been computed, which is difficult when working with points. Additionally, a covering of $P_c$ by boxes makes it easy to obtain (and exploit) its topological properties. 
	In the approach, we evenly divide the variable space $\mathbb{R}^n$ into hypercubes or \emph{boxes} $B$ with radius $r > 0$:
	\begin{align} \label{p5eq:def_boxes}
		\mathcal{B}(r) &:= \{ [-r,r]^n + (2 i_1 r,...,2 i_n r)^\top : (i_1,...,i_n) \in \mathbb{Z}^n \}.
	\end{align}	
	
	\begin{remark}
		For ease of notation and readability, we will only consider the case where points $u \in \mathbb{R}^n$ are contained in single boxes. In other words, we only consider the case where $u$ is in the interior of a box and not in the intersection of multiple boxes. Since this is the generic case, this has no impact on the numerical methods we will propose later. \hfill$\blacksquare$
	\end{remark}	
	
	For $u \in \mathbb{R}^n$ let $B(u,r)$ be the box containing $u$. We want to compute the subset of $\mathcal{B}(r)$ covering the Pareto critical set for a given radius $r$, i.e., 
	\[
		\mathcal{B}_c(r) := \{ B \in \mathcal{B}(r) : P_c \cap B \neq \emptyset \}.
	\]
	Since we are interested in a covering via boxes instead of an approximation via points, when moving in a tangent direction of the critical set, we will search for \emph{tangent boxes} instead of single points. For $u \in \mathbb{R}^n$ let
	\begin{equation*}
		N(u,r) := \{ B \in \mathcal{B}(r) : B(u,r) \cap B \neq \emptyset \}
	\end{equation*}
	be the set of neighboring boxes of $B(u,r)$. Starting from a box $B(\bar{u},r)$ containing a critical point $\bar{u}$ with KKT vector $\bar{\alpha}$, we want to explore the neighboring boxes covering the projected tangent space at $\bar{u}$, i.e.,
	\begin{equation} \label{p5eq:exact_tangent_boxes}
		\mathcal{B}'(\bar{u},r) = \{ B' \in \mathcal{B}(r) : B' \in N(\bar{u},r), \ B' \cap \bar{u} + pr_u(T_{(\bar{u},\bar{\alpha})} \mathcal{M}) \neq \emptyset \}.
	\end{equation}	
	Here, $pr_u : \mathbb{R}^{n+k} \rightarrow \mathbb{R}^n$ is the projection of the tangent space onto the first $n$ components, i.e., the variable space. The typical situation is visualized in Figure \ref{p5fig:exact_cont}.
	
	\begin{figure}[ht]
		\centering
		\includegraphics[width=0.5\textwidth]{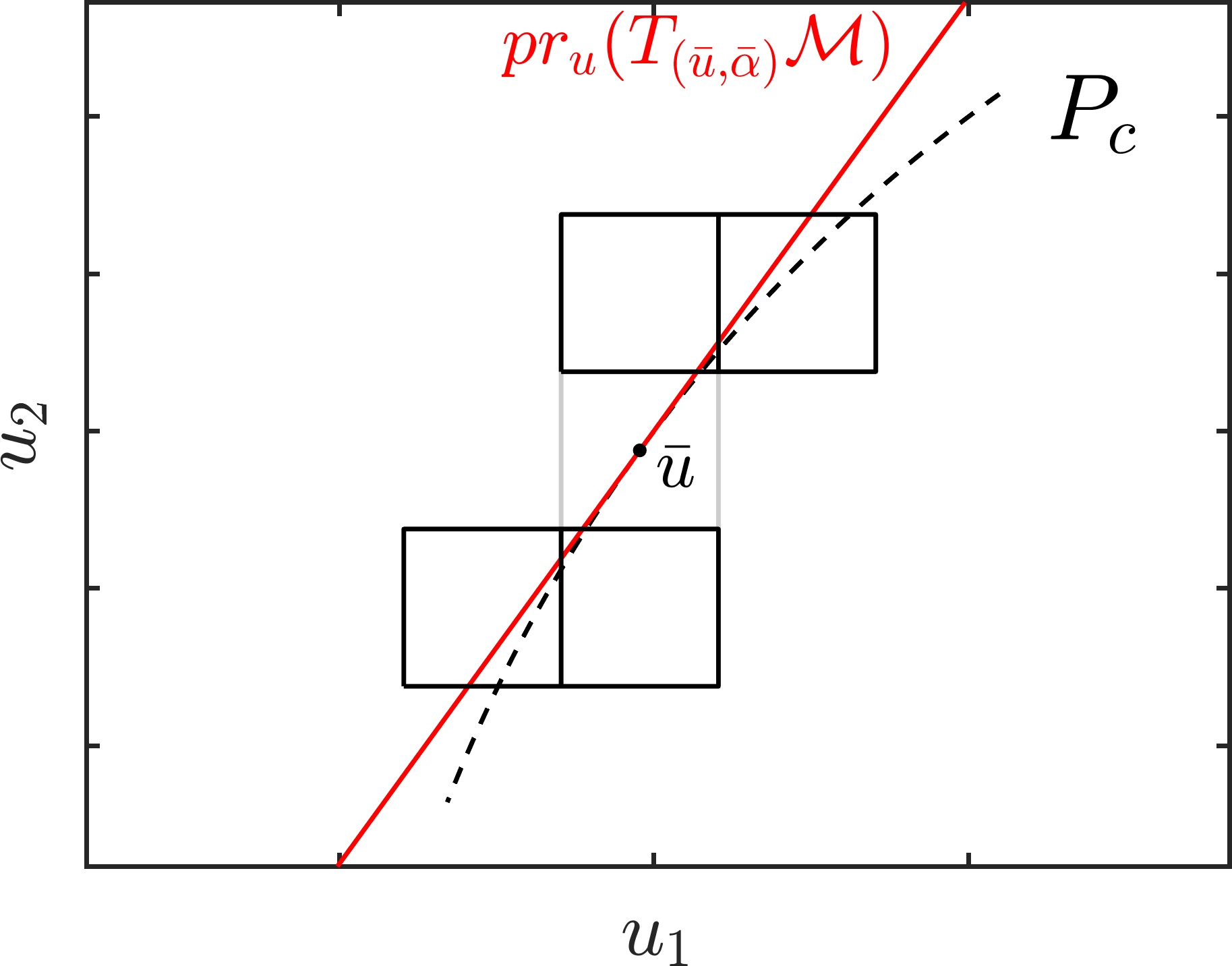}
		\caption{Tangent boxes (black) of the initial box (grey) containing $\bar{u}$, which is contained in the Pareto critical set $P_c$ (dashed). The red line indicates the projection of the tangent space of $\mathcal{M}$ onto the variable space}
		\label{p5fig:exact_cont}
	\end{figure}		
	
	As the tangent space of the Pareto critical set is only a linear approximation, a corrector step is required to verify that a given tangent box actually contains part of the Pareto critical set. This means that there has to be at least one $u\in B$ satisfying \eqref{p5eq:KKT}. To this end, for a box $B$, we consider the problem
	\begin{equation}\label{p5eq:KKT_box_test}\tag{PC-Box}
		\min_{u \in B, \alpha \in \Delta_k}  \| DJ(u)^\top \alpha \|_2^2
	\end{equation}
	Let $\theta(B)$ be the optimal value of this problem. Then obviously 
	\begin{equation*}
		\theta(B) = 0 \Leftrightarrow B \cap P_c \neq \emptyset.
	\end{equation*}
	In particular, if $\theta(B) = 0$ and $(\bar{u},\bar{\alpha})$ is the solution of \eqref{p5eq:KKT_box_test}, then $\bar{u}$ is Pareto critical with corresponding KKT vector $\bar{\alpha}$. After solving \eqref{p5eq:KKT_box_test} in each tangent box, all boxes with $\theta(B) = 0$ are added to a queue and a new iteration of the method is started with the first element in the queue. The method stops when the queue is empty, i.e., when there is no neighboring box of the current set of boxes that contains part of the Pareto critical set. For the remainder of this article, we will refer to this method as the \emph{exact continuation method}.
		
	\subsection{Continuation method with inexact gradients}	\label{p5subsec:ContinuationMethodInexact}	
	Using ROM to solve the state equation of an MOP of an elliptic PDE will introduce an error in the objective functions and the corresponding gradients, which has to be taken into account in order to ensure Pareto criticality of the solution. We here present a method that calculates a tight superset of the Pareto critical set via numerical continuation, using upper bounds for the errors in the approximated gradients. Formally, we now assume that for each gradient $\nabla J_i$, we only have an approximation $\nabla J_i^r$ such that
	\begin{equation} \label{p5eq:error_bounds}
		\sup_{u \in \mathbb{R}^n} \left\lVert \nabla J_i(u) - \nabla J^r_i(u) \right\rVert_2 \leq \epsilon_i, \ i \in \{1,...,k\},
	\end{equation}
	with upper bounds $\epsilon = (\epsilon_1, ..., \epsilon_k)^\top \in \mathbb{R}^k$.	Let $P_c$ and $P^r_c$ be the Pareto critical sets corresponding to $(\nabla J_i)_i$ and $(\nabla J_i^r)_i$, respectively. The following lemma shows how these error bounds translate to error bounds for the KKT conditions:
	
	\begin{lemma} \label{p5lem:KKT_error}
		Let $\bar{u} \in \mathbb{R}^n$ be Pareto critical for $J$ with KKT vector $\bar{\alpha} \in \Delta_k$. Then
		\begin{equation*}
			\| DJ^r(\bar{u})^\top \bar{\alpha} \|_2 \leq \sum_{i = 1}^k \bar{\alpha}_i \epsilon_i \leq \| \epsilon \|_\infty.
		\end{equation*}
	\end{lemma}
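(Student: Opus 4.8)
The plan is to exploit the fact that $\bar{\alpha}$ is already a \eqref{p5eq:KKT} multiplier for the \emph{exact} gradients, which lets me insert a productive zero. Since $\bar{u}$ is Pareto critical for $J$ with KKT vector $\bar{\alpha}$, I have $DJ(\bar{u})^\top \bar{\alpha} = \sum_{i=1}^k \bar{\alpha}_i \nabla J_i(\bar{u}) = 0$. The first step is therefore to rewrite the quantity of interest by subtracting this vanishing term:
\begin{equation*}
	DJ^r(\bar{u})^\top \bar{\alpha} = \sum_{i=1}^k \bar{\alpha}_i \nabla J_i^r(\bar{u}) - \sum_{i=1}^k \bar{\alpha}_i \nabla J_i(\bar{u}) = \sum_{i=1}^k \bar{\alpha}_i \bigl( \nabla J_i^r(\bar{u}) - \nabla J_i(\bar{u}) \bigr).
\end{equation*}

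The second step is to take the Euclidean norm and apply the triangle inequality together with the nonnegativity of the multipliers $\bar{\alpha}_i \geq 0$ (which follows from $\bar{\alpha} \in \Delta_k$), so that the coefficients can be pulled out without absolute values. The per-objective error bounds \eqref{p5eq:error_bounds} then give $\| \nabla J_i^r(\bar{u}) - \nabla J_i(\bar{u}) \|_2 \leq \epsilon_i$ at the specific point $\bar{u}$, yielding
\begin{equation*}
	\| DJ^r(\bar{u})^\top \bar{\alpha} \|_2 \leq \sum_{i=1}^k \bar{\alpha}_i \| \nabla J_i^r(\bar{u}) - \nabla J_i(\bar{u}) \|_2 \leq \sum_{i=1}^k \bar{\alpha}_i \epsilon_i.
\end{equation*}
This establishes the first inequality of the claim.

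The third and final step is the weighted-average bound: since $\bar{\alpha}$ lies in the simplex $\Delta_k$, the quantity $\sum_{i=1}^k \bar{\alpha}_i \epsilon_i$ is a convex combination of the entries of $\epsilon$, and any such combination is bounded above by the largest entry, i.e.\ $\sum_{i=1}^k \bar{\alpha}_i \epsilon_i \leq \| \epsilon \|_\infty \sum_{i=1}^k \bar{\alpha}_i = \| \epsilon \|_\infty$. This gives the second inequality and completes the argument.

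I do not expect any genuine obstacle here; the result is a short estimate. The only point requiring a little care is the very first move — recognizing that the exact KKT condition supplies a zero that converts the approximate residual $DJ^r(\bar{u})^\top\bar{\alpha}$ into a weighted sum of pure gradient errors, to which the pointwise bounds \eqref{p5eq:error_bounds} directly apply. Everything else is the triangle inequality and the defining properties of the simplex $\Delta_k$.
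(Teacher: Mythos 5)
Your proof is correct and follows exactly the same route as the paper: insert the vanishing term $DJ(\bar{u})^\top\bar{\alpha}=0$, apply the triangle inequality with the nonnegative weights $\bar{\alpha}_i$, invoke the pointwise bounds \eqref{p5eq:error_bounds}, and finish with the convex-combination estimate on the simplex. No differences worth noting.
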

	\begin{proof}
		From the estimate
		\begin{align*}
			\left\lVert DJ^r(\bar{u})^\top \bar{\alpha} \right\rVert_2 
			&= \left\lVert DJ^r(\bar{u})^\top \bar{\alpha} - DJ(\bar{u})^\top \bar{\alpha} \right\rVert_2 
			= \left\lVert \sum_{i = 1}^k (\nabla J_i^r(\bar{u}) - \nabla J_i(\bar{u}))^\top \bar{\alpha}_i \right\rVert_2 \\
			&\leq \sum_{i = 1}^k  \left\lVert \nabla J_i^r(\bar{u}) - \nabla J_i(\bar{u}) \right\rVert_2 \bar{\alpha}_i 
			\leq \sum_{i = 1}^k \bar{\alpha}_i \epsilon_i 
			\leq \left\lVert \epsilon \right\rVert_\infty
		\end{align*}
		we derive the claim.
	\end{proof}			
		
	\begin{remark}
		Lemma \ref{p5lem:KKT_error} can be generalized to equality and inequality constrained MOPs using the constrained version of the optimality conditions from \cite{H2001}. In this case, in the norm on the left-hand side of the inequality in Lemma \ref{p5lem:KKT_error}, one additionally has to add a linear combination of the gradients of the equality and inequality constraints. \hfill$\blacksquare$
	\end{remark}			
		
	Lemma \ref{p5lem:KKT_error} shows that we have to weaken the conditions for Pareto criticality of the reduced objective function to obtain a superset of the actual Pareto critical set $P_c$. Formally, let 
	\begin{align*}
		P_1^r &:= \left\{ u \in \mathbb{R}^n : \min_{\alpha \in \Delta_k} \| DJ^r(u)^\top \alpha \|_2^2 \leq \| \epsilon \|_\infty^2 \right\}, \\
		P_2^r &:= \left\{ u \in \mathbb{R}^n : \min_{\alpha \in \Delta_k} \left( \| DJ^r(u)^\top \alpha \|_2^2 - (\alpha^\top \epsilon)^2 \right) \leq 0 \right\}.
	\end{align*}
	$P_1^r$ was also considered in \cite{PD2017} in the context of descent directions, where the solution of $\min_{\alpha \in \Delta_k} \| DJ^r(u)^\top \alpha \|_2^2$ is the squared length of the steepest descent direction in $u$. The condition for a point being in $P_1^r$ only depends on the maximal error $\| \epsilon \|_\infty$ and can be seen as a relaxed version of the KKT conditions for the inexact objective function. In contrast to this, the condition in $P_2^r$ actually considers the individual error bounds. By Lemma \ref{p5lem:KKT_error}, 
	\begin{equation*}
		P_c \subseteq P_2^r \subseteq P_1^r \text{ and } P^r_c \subseteq P_2^r \subseteq P_1^r,
	\end{equation*}
	i.e., both $P_1^r$ and $P_2^r$ are supersets of $P_c$ and $P_c^r$ (the points $\bar{u}$ for which the inexact gradients satisfy \eqref{p5eq:KKT}). In fact, $P_2^r$ is a tight superset of $P_c$ in the following sense:
	\begin{lemma} \label{p5lem:P2_tight}
		Let $\tilde{u} \in P_2^r$. Then there is some continuously differentiable $\tilde{J} : \mathbb{R}^n \rightarrow \mathbb{R}^k$ with 
		\begin{equation*}
			\sup_{u \in \mathbb{R}^n} \| \nabla \tilde{J}_i(u) - \nabla J^r_i(u) \|_2 \leq \epsilon_i \ \forall i \in \{1,...,k\}
		\end{equation*}
		such that $\tilde{u}$ is Pareto critical for $\tilde{J}$.
	\end{lemma}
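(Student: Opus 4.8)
The plan is to construct $\tilde{J}$ by perturbing each component $J_i^r$ with an \emph{affine} term, exploiting the fact that adding a linear functional $d_i^\top u$ to $J_i^r$ shifts its gradient by the \emph{constant} vector $d_i$ at every point. This turns the global requirement $\sup_{u}\|\nabla\tilde J_i(u)-\nabla J^r_i(u)\|_2\le\epsilon_i$ into the single pointwise inequality $\|d_i\|_2\le\epsilon_i$, thereby decoupling the uniform smoothness constraint from the local criticality condition we must enforce at $\tilde u$. Continuous differentiability of $\tilde J$ is then automatically inherited from $J^r$, since affine terms are $C^\infty$.

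First I would unpack the membership $\tilde{u}\in P_2^r$. By definition there is a minimizing multiplier $\bar\alpha\in\Delta_k$ with $\|DJ^r(\tilde u)^\top\bar\alpha\|_2^2\le(\bar\alpha^\top\epsilon)^2$, equivalently $\|g\|_2\le\bar\alpha^\top\epsilon$ for the residual $g:=DJ^r(\tilde u)^\top\bar\alpha=\sum_{i=1}^k\bar\alpha_i\nabla J_i^r(\tilde u)$. Setting $\tilde J_i(u):=J_i^r(u)+d_i^\top u$, Pareto criticality of $\tilde u$ for $\tilde J$ with KKT vector $\bar\alpha$ amounts to $\sum_i\bar\alpha_i\nabla\tilde J_i(\tilde u)=g+\sum_i\bar\alpha_i d_i=0$, i.e. we need $\sum_i\bar\alpha_i d_i=-g$. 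The natural choice is to take all $d_i$ collinear with $g$ and distribute the correction proportionally to the individual error budgets, $d_i:=-\tfrac{\epsilon_i}{\bar\alpha^\top\epsilon}\,g$, with the degenerate cases $g=0$ (take $\tilde J=J^r$) and $\bar\alpha^\top\epsilon=0$ (which forces $g=0$) handled separately.

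It then remains to verify the two requirements. The criticality identity follows from $\sum_i\bar\alpha_i d_i=-\tfrac{g}{\bar\alpha^\top\epsilon}\sum_i\bar\alpha_i\epsilon_i=-g$, while the gradient bound follows from $\|d_i\|_2=\tfrac{\epsilon_i}{\bar\alpha^\top\epsilon}\|g\|_2\le\epsilon_i$, where the inequality is exactly the membership condition $\|g\|_2\le\bar\alpha^\top\epsilon$. Both checks are routine once the construction is in place.

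The only real subtlety—what might look like the main obstacle—is reconciling a \emph{global} gradient bound with a \emph{pointwise} criticality condition: in general one cannot bend a function at a single point without controlling its behaviour everywhere. The affine perturbation resolves this cleanly because its gradient correction is spatially constant, so the local and global modifications coincide, and the defining inequality of $P_2^r$ is precisely the feasibility condition guaranteeing that the aggregate budget $\bar\alpha^\top\epsilon$ is large enough to absorb the residual $g$. The proportional split across the $k$ objectives is what ensures each individual budget $\epsilon_i$ is respected.
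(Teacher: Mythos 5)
Your construction is exactly the one in the paper: the authors set $\nu := DJ^r(\tilde u)^\top\tilde\alpha$ and perturb by the affine map $g(u) = -\bigl(\tfrac{1}{\tilde\alpha^\top\epsilon}\nu^\top u\bigr)\epsilon$, which is precisely your $d_i = -\tfrac{\epsilon_i}{\bar\alpha^\top\epsilon}\,g$, and the two verification steps coincide. The only (minor) difference is that you explicitly treat the degenerate cases $g=0$ and $\bar\alpha^\top\epsilon=0$, which the paper leaves implicit; your argument is correct.
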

	\begin{proof}
		Let
		\begin{align*}
			\tilde{\alpha} &\in \text{argmin}_{\alpha \in \Delta_k} \left( \| DJ^r(u)^\top \alpha \|_2^2 - (\alpha^\top \epsilon)^2 \right), \\
			\nu &:= DJ^r(\tilde{u})^\top \tilde{\alpha}, \\
			g(u) &:= - \left( \frac{1}{\tilde{\alpha}^\top \epsilon} \sum_{i = 1}^n \nu_i u_i \right) \epsilon, \\
			\tilde{J}(u) &:= J^r(u) + g(u).			
		\end{align*}
		Since $\tilde{u} \in P_2^r$ by assumption, we have $\| \nu \|_2 \leq \tilde{\alpha}^\top \epsilon$. Thus 
		\begin{equation*}
			\| \nabla \tilde{J}_i(u) - \nabla J^r_i(u) \|_2 = \| \nabla g_i(u) \|_2 = \frac{\epsilon_i}{\tilde{\alpha}^\top \epsilon} \| \nu \|_2 \leq \epsilon_i \quad \forall u \in \mathbb{R}^n \text{ and } \forall i \in \{1,...,k\},
		\end{equation*}
		and
		\begin{align*}
			D \tilde{J}(\tilde{u})^\top \tilde{\alpha} = \nu + \sum_{i = 1}^k \tilde{\alpha}_i \nabla g_i(\tilde{u}) = \nu - \sum_{i = 1}^k \tilde{\alpha}_i \frac{\epsilon_i}{\tilde{\alpha}^\top \epsilon} \nu = 0,
		\end{align*}
		which proves the lemma.
	\end{proof}
		
	Lemma \ref{p5lem:P2_tight} shows that for each point $\tilde{u}$ in $P_2^r$, there is an objective function satisfying the error bounds \eqref{p5eq:error_bounds} for which $\tilde{u}$ is Pareto critical. As a result, $P_2^r$ is the tightest superset of $P_c$ we can hope for if we only have the estimates in \eqref{p5eq:error_bounds}. The following example shows both supersets for a simple MOP (cf.~\cite{PD2017}).	
	
	\begin{example} \label{p5example:simple}
		Let 
		\begin{equation*} \label{p5eq:MOP_simple_example}
			J^r : \mathbb{R}^2 \rightarrow \mathbb{R}^2, \ u \mapsto
			\begin{pmatrix}
				(u_1 - 1)^2 + (u_2 - 1)^4 \\
				(u_1 + 1)^2 + (u_2 + 1)^2
			\end{pmatrix}.
		\end{equation*}
		We consider the two error bounds $\epsilon^1 = (0.2, 0.05)^\top$ and $\epsilon^2 = (0, 0.2)^\top$. The corresponding supersets $P_1^r$ and $P_2^r$ are shown in Figure \ref{p5fig:simple_example}.
		
		\begin{figure}[h!] 
			\parbox[b]{0.49\textwidth}{
				\centering 
				\includegraphics[width=0.425\textwidth]{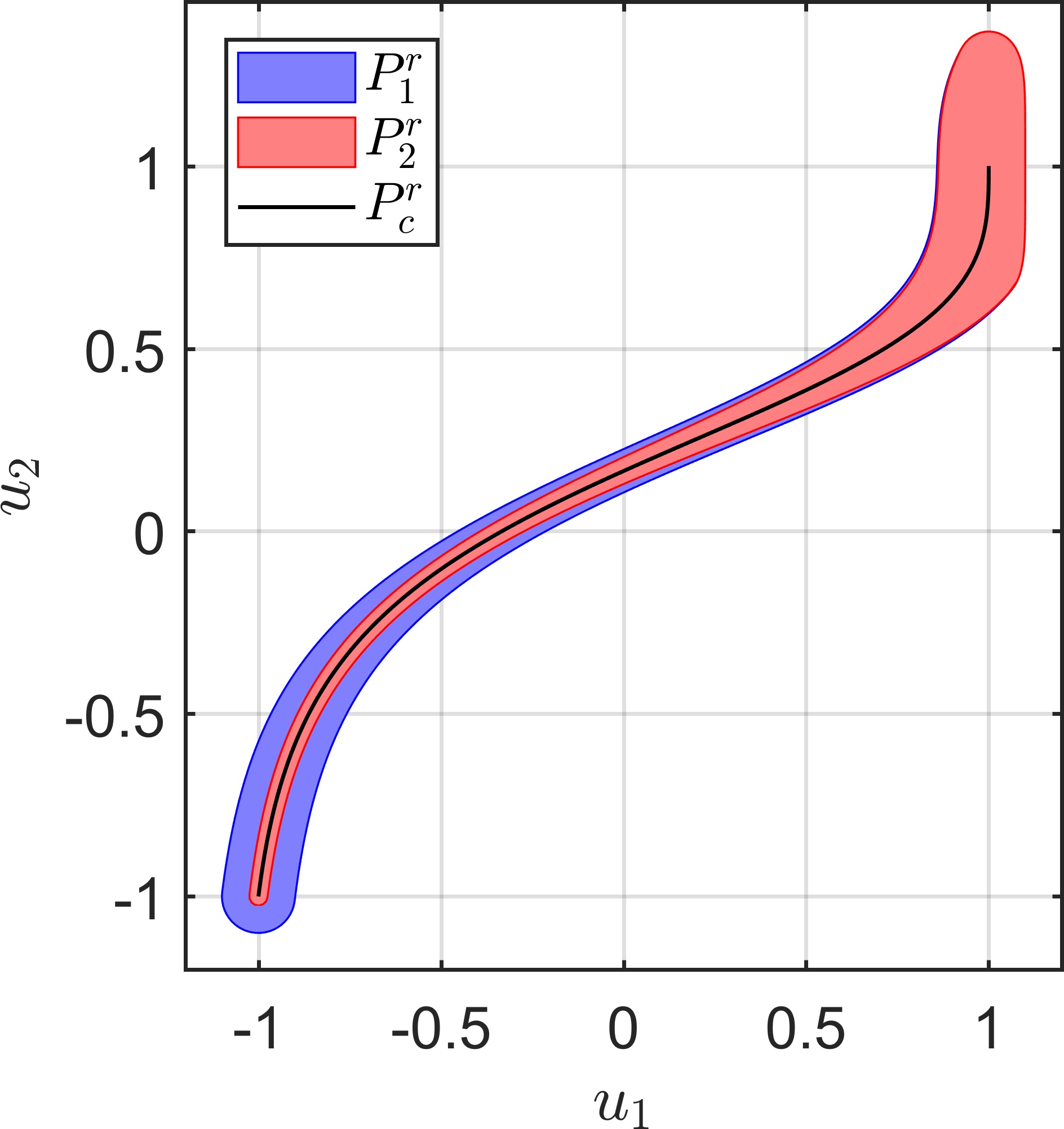}\\
				\textbf{(a) $\epsilon = \epsilon^1 = (0.2, 0.05)^\top$}
			}
			\parbox[b]{0.49\textwidth}{
				\centering 
				\includegraphics[width=0.425\textwidth]{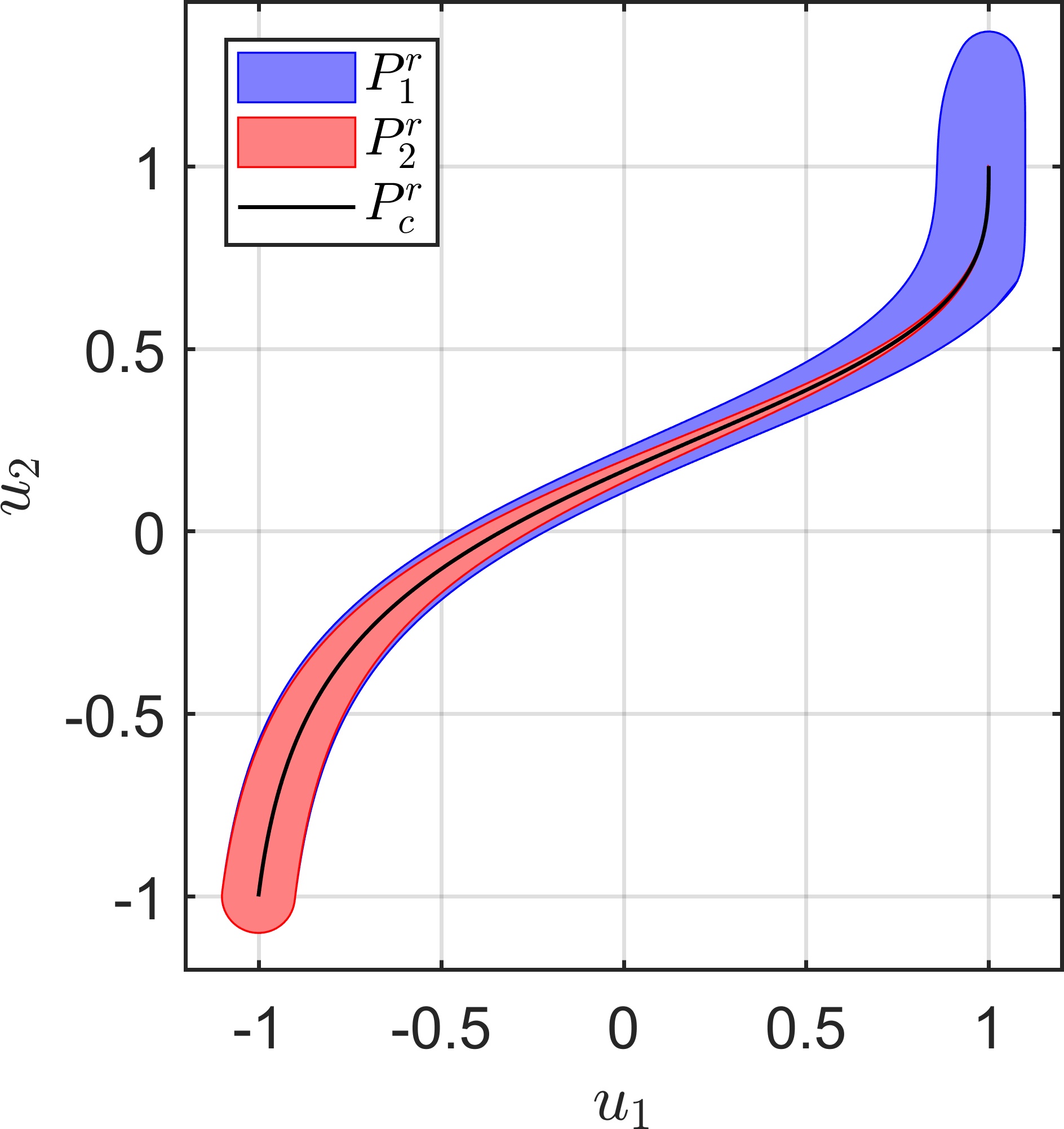}\\
				\textbf{(b) $\epsilon = \epsilon^2 = (0, 0.2)^\top$} 
			}
			\caption{$P_1^r$ and $P_2^r$ for different error bounds $\epsilon$}
			\label{p5fig:simple_example}
		\end{figure}			
		
		As $\| \epsilon_1 \|_\infty = \| \epsilon_2 \|_\infty = 0.2$, $P_1^r$ is identical for both error bounds. Considering each component of $J^r$ individually, the critical points of $J_1^r$ and $J_2^r$ are located at $u^1 = (1,1)^\top$ and $u^2 = (-1,-1)^\top$, respectively. For $P_2^r$, we see that the difference between $P_c^r$ and $P_2^r$ becomes smaller the closer we get to the critical point of the objective function with the smaller error bound. This can be expected, as the influence (or weight) of $\nabla J^r_i(u)$ in the KKT conditions \eqref{p5eq:KKT} becomes larger the closer $u$ is to $u^i$.	In particular, in Figure \ref{p5fig:simple_example}(b), the difference between $P_2^r$ and $P_c^r$ at $(1,1)^\top$ becomes zero, as $\epsilon^2_1 = 0$. \hfill$\Diamond$
	\end{example}	

	If we set $\epsilon_i = \| \epsilon \|_\infty$ for all $i \in \{1,...,k\}$, then $P_1^r = P_2^r$. Thus, we will from now on only consider $P_2^r$. As shown in the previous example, the ``dimension'' of $P_2^r$ is higher than the ``dimension'' of $P_c^r$. More precisely, $P_2^r$ contains the closure of an open subset of $\mathbb{R}^n$, which is shown in the following lemma:
	
	\begin{lemma} \label{p5lem:dim_P2}
		Let $\nabla J^r_i$ be continuous for all $i \in \{1,...,k\}$. Let
		\begin{equation*}
			A := \left\{ u \in \mathbb{R}^n : \min_{\alpha \in \Delta_k} \left( \| DJ^r(u)^\top \alpha \|_2^2 - (\alpha^\top \epsilon)^2 \right) < 0 \right\}.
		\end{equation*}
		Then
		\begin{enumerate}
			\item[(a)] $P_2^r$ is closed. In particular, $\overline{A} \subseteq P_2^r$.
			\item[(b)] $A$ is open.
		\end{enumerate}				
	\end{lemma}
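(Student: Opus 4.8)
The plan is to reduce both statements to the continuity of a single auxiliary function. I would define
\[
  h : \mathbb{R}^n \times \Delta_k \to \mathbb{R}, \qquad h(u,\alpha) := \| DJ^r(u)^\top \alpha \|_2^2 - (\alpha^\top \epsilon)^2,
\]
and set $\phi(u) := \min_{\alpha \in \Delta_k} h(u,\alpha)$. The minimum is attained because $\Delta_k$ is compact and $h(u,\cdot)$ is continuous, so $\phi$ is well defined, and by construction $P_2^r = \{ u \in \mathbb{R}^n : \phi(u) \le 0 \}$ and $A = \{ u \in \mathbb{R}^n : \phi(u) < 0 \}$. Since $\nabla J_i^r$ is continuous for every $i$, the map $(u,\alpha) \mapsto DJ^r(u)^\top \alpha = \sum_{i=1}^k \alpha_i \nabla J_i^r(u)$ is jointly continuous, hence so are $\| DJ^r(u)^\top \alpha \|_2^2$ and $(\alpha^\top \epsilon)^2$; thus $h$ is jointly continuous on $\mathbb{R}^n \times \Delta_k$.

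The core of the argument is to show that $\phi$ is continuous, which I would establish by proving upper and lower semicontinuity separately. For upper semicontinuity, fix $u_0$ and choose $\alpha_0 \in \Delta_k$ attaining $\phi(u_0)$; then $\phi(u) \le h(u,\alpha_0)$ for all $u$, and letting $u \to u_0$ gives $\limsup_{u \to u_0} \phi(u) \le h(u_0,\alpha_0) = \phi(u_0)$. For lower semicontinuity, suppose toward a contradiction that $u_m \to u_0$ with $\phi(u_m) \to L < \phi(u_0)$ (passing to a subsequence so that the limit exists). Choosing minimizers $\alpha_m \in \Delta_k$ with $\phi(u_m) = h(u_m,\alpha_m)$ and extracting a further subsequence with $\alpha_m \to \alpha_* \in \Delta_k$ (possible since $\Delta_k$ is compact), joint continuity of $h$ gives $L = \lim_m h(u_m,\alpha_m) = h(u_0,\alpha_*) \ge \phi(u_0)$, contradicting $L < \phi(u_0)$. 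Together these yield $\lim_{u \to u_0} \phi(u) = \phi(u_0)$, i.e., $\phi \in C(\mathbb{R}^n)$.

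With continuity of $\phi$ in hand, both claims are immediate. For (a), $P_2^r = \phi^{-1}\big((-\infty,0]\big)$ is the preimage of a closed set under a continuous map and is therefore closed; since $A = \phi^{-1}\big((-\infty,0)\big) \subseteq P_2^r$, taking closures gives $\overline{A} \subseteq \overline{P_2^r} = P_2^r$. For (b), $A = \phi^{-1}\big((-\infty,0)\big)$ is the preimage of an open set and is therefore open.

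I would expect the only delicate point to be the continuity of the parametric minimum $\phi$, and in particular the lower-semicontinuity direction, where one must exploit compactness of $\Delta_k$ to extract a convergent sequence of minimizers rather than argue pointwise in $\alpha$. Everything else is a routine consequence of the continuity of the $\nabla J_i^r$ and the standard fact that continuous preimages of closed (resp.\ open) sets are closed (resp.\ open).
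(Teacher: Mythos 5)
Your proof is correct and rests on the same two ingredients as the paper's: compactness of $\Delta_k$ to extract a convergent subsequence of minimizers (your lower-semicontinuity step is exactly the paper's sequential closedness argument for part (a)), and the trivial bound $\min_{\alpha}h(u,\alpha)\le h(u,\bar\alpha)$ with a fixed minimizer (your upper-semicontinuity step is exactly the paper's argument for part (b)). The only difference is organizational: you package both steps as full continuity of the marginal function $\phi$ --- which is the map $\varphi$ from \eqref{p5eq:def_varphi} --- and then read off both claims as preimages, whereas the paper proves the two semicontinuity facts in situ without naming them.
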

	\begin{proof}
		(a) The case $P_2^r = \emptyset$ is trivial, so we assume that $P_2^r \neq \emptyset$. Let $\bar{u} \in \overline{P_2^r}$. Then there is a sequence $(u^i)_i \in P_2^r$ with $\lim_{i \rightarrow \infty} u^i = \bar{u}$. Consider the sequence $(\alpha^i)_i \in \Delta_k$ with
		\begin{equation*}
			\alpha^i \in \text{argmin}_{\alpha \in \Delta_k} \left( \| DJ^r(u^i)^\top \alpha \|_2^2 - (\alpha^\top \epsilon)^2 \right).
		\end{equation*}
		By compactness of $\Delta_k$, we can assume w.l.o.g.~that there is some $\bar{\alpha} \in \Delta_k$ with $\lim_{i \rightarrow \infty} \alpha^i = \bar{\alpha}$. Let
		\begin{equation*}
			\Psi : \mathbb{R}^n \times \Delta_k \rightarrow \mathbb{R}, \quad (u,\alpha) \mapsto \| DJ^r(u)^\top \alpha \|_2^2 - (\alpha^\top \epsilon)^2.
		\end{equation*}
		By our assumption, $\Psi$ is continuous. From $\Psi(u^i,\alpha^i) < 0$ for all $i \in \mathbb{N}$ it follows that $\Psi(\bar{u},\bar{\alpha}) \leq 0$, which yields $\bar{u} \in P_2^r$. \\
		(b) The case $A = \emptyset$ is again trivial such that we assume $A \neq \emptyset$. Let $\bar{u} \in A$ with 
		\begin{equation*}
			\bar{\alpha} \in \text{argmin}_{\alpha \in \Delta_k} \left( \| DJ^r(\bar{u})^\top \alpha \|_2^2 - (\alpha^\top \epsilon)^2 \right).
		\end{equation*}
		Let $\psi : \mathbb{R}^n \rightarrow \mathbb{R}$, $u \mapsto \| DJ^r(u)^\top \bar{\alpha} \|_2^2 - (\bar{\alpha}^\top \epsilon)^2$. Then $\psi(\bar{u}) < 0$ and by our assumption, $\psi$ is continuous. Therefore, there is some open set $U \subseteq \mathbb{R}^n$ with $\bar{u} \in U$ such that $\psi(u) < 0$ for all $u \in U$. Since 
		\begin{equation*}
			\min_{\alpha \in \Delta_k} \left( \| DJ^r(u)^\top \alpha \|_2^2 - (\alpha^\top \epsilon)^2 \right) \leq \psi(u) < 0 \quad \forall u \in U
		\end{equation*}
		we have $U \subseteq A$ such that $A$ is open.
	\end{proof}			

	We will now present two strategies for the numerical computation of $P^r_2$. Analogously to the case with exact gradients, we will approximate $P_2^r$ via the box covering
	\begin{equation*}
		\mathcal{B}_c^r(r) := \{ B \in \mathcal{B}(r) : B \cap P_2^r \neq \emptyset \}.
	\end{equation*}
		
	\subsubsection{Strategy 1}
	The idea of our first method is to mimic the exact continuation method to calculate $\mathcal{B}_c^r$. For this, there are mainly two modifications we have to make:
	\begin{enumerate}
		\item By Lemma \ref{p5lem:dim_P2}, $P_2^r$ is not a lower-dimensional object in $\mathbb{R}^n$, so it makes no sense to use tangent information to find first-order candidates as in \eqref{p5eq:exact_tangent_boxes}. Instead, we have to consider all neighboring boxes.
		\item The problem \eqref{p5eq:KKT_box_test} has to be replaced by a problem that checks the defining inequality of $P_2^r$.
	\end{enumerate}
		
	As a replacement for \eqref{p5eq:KKT_box_test}, we consider the following problem:
	
	\begin{align} \label{p5eq:KKTred_box_test}
		\min_{u \in B, \alpha \in \Delta_k} & \| DJ(u)^\top \alpha \|_2^2 - (\alpha^\top \epsilon)^2. \tag{$\epsilon$PC-Box}
	\end{align}		

	Let $\theta_\epsilon(B)$ be the optimal value of this problem. Note that $\theta_\epsilon(B) < 0$ is sufficient to verify that a box $B$ contains part of $P_2^r$. As a result, we do not need to solve \eqref{p5eq:KKTred_box_test} exactly. For example, when using an iterative method for the solution of \eqref{p5eq:KKTred_box_test}, we can stop when the function value is negative. The above mentioned changes yield Algorithm \ref{p5algo:boxcon_eps}.	
		
	\begin{algorithm} 
		\caption{Strategy 1: Box-Continuation Algorithm with Inexact Gradients}
		\label{p5algo:boxcon_eps}
		\begin{algorithmic}[1] 
			\item[] \hspace{-\algorithmicindent} Given: Radius $r > 0$ of boxes.
			\State Choose an initial point $u_0 \in P_2^r$ and initialize $\mathcal{B} = \{ B(u_0,r) \}$ and a queue $Q = \{ u_0 \}$.
			\While{$Q \neq \emptyset$}
				\State Remove the first element $\bar{u}$ from $Q$.
				\For{$B' \in N(\bar{u},r) \setminus \mathcal{B}$}
					\State Solve \eqref{p5eq:KKTred_box_test} for $B'$. Let $\theta_\epsilon(B')$ be the optimal value and $(u',\alpha')$ be
					\item[] \hspace{\algorithmicindent}\hspace{\algorithmicindent}the solution. 
					\If{$\theta_\epsilon(B') \leq 0$}
						\State Add $u'$ to $Q$ and $B'$ to $\mathcal{B}$.
					\EndIf								
				\EndFor
			\EndWhile
		\end{algorithmic} 
	\end{algorithm}			
		
	Due to the loss of low-dimensionality of $P_2^r$, the formulation of the continuation method becomes much simpler. As a consequence, it is straightforward to show that Algorithm \ref{p5algo:boxcon_eps} yields the desired covering $\mathcal{B}_c^r(r)$.
	
	When executing the exact continuation method directly using inexact gradients (i.e., forgetting about the inexactness) and comparing it to Algorithm \ref{p5algo:boxcon_eps} (with the same box radius), the former will generally be much faster than the latter. A suitable way to evaluate the run time is to compare the number of times Problems \eqref{p5eq:KKT_box_test} and \eqref{p5eq:KKTred_box_test} need to be solved, respectively, as they require the majority of the computing time and are equally difficult to solve. (Here, we assume that both problems are solved with equal precision.) For each box added to the collection $\mathcal{B}$ in either algorithm, one of these problems has to be solved. Consequently, the longer run time of Algorithm \ref{p5algo:boxcon_eps} is partly due to the fact that $P_2^r$ is a superset of $P_c^r$, which means that more boxes are required to cover $P_2^r$ than $P_c^r$. However, even if the error bounds $\epsilon$ are small such that $P_2^r$ and $P_c^r$ are almost equal, Algorithm \ref{p5algo:boxcon_eps} will be slower. This is due to the fact that instead of only the tangent boxes, all neighboring boxes have to be tested with \eqref{p5eq:KKTred_box_test} in each loop of Algorithm \ref{p5algo:boxcon_eps}. While this does not matter in the interior of $P_2^r$ (as all neighboring boxes are in fact in $P_2^r$ in that case), it is very inefficient at the boundary of $P_2^r$. This is the motivation for the second strategy.
	
	\subsubsection{Strategy 2}
	
	By Lemma \ref{p5lem:dim_P2}, $P_2^r$ has the same dimension as the space of variables $\mathbb{R}^n$. This means that it can be described much more efficiently by its topological boundary $\partial P_2^r$. To be more precise, $\mathbb{R}^n \setminus \partial P_2^r$ consists of different connected components that lie either completely inside or completely outside $P_2^r$. So if we know $\partial P_2^r$, we merely have to test one point of each connected component if it is contained in $P_2^r$ or not to completely determine $P_2^r$. Therefore, the idea of our second strategy is to only compute $\partial P_2^r$. 
	
	Let
	\begin{equation} \label{p5eq:def_varphi}
		\varphi : \mathbb{R}^n \rightarrow \mathbb{R}, \quad u \mapsto \min_{\alpha \in \Delta_k} \left( \| DJ^r(u)^\top \alpha \|_2^2 - (\alpha^\top \epsilon)^2 \right).
	\end{equation}
	This map is well-defined since $\Delta_k$ is compact, i.e., the minimum always exists. By Lemma \ref{p5lem:dim_P2}, we have $\partial P_2^r \subseteq \varphi^{-1}(0)$. Our goal is to compute $\varphi^{-1}(0)$ via a continuation approach. To this end, we first have to show that $\varphi$ is differentiable. We will do this by investigating the properties of the optimization problem in \eqref{p5eq:def_varphi}, i.e., of the problem	
	\begin{align} \label{p5eq:P2_problem}
		\min_{\alpha \in \mathbb{R}^k} \ & \omega(\alpha), \nonumber \\
		s.t. \quad & \sum_{i = 1}^k \alpha_i = 1, \\
		\quad & \alpha_i \geq 0 \quad \forall i \in \{1,...,k\}, \nonumber
	\end{align}		
	for
	\begin{equation*}
		\omega(\alpha) := \| DJ^r(u)^\top \alpha \|_2^2 - (\alpha^\top \epsilon)^2 = \alpha^\top (DJ^r(u) DJ^r(u)^\top - \epsilon \epsilon^\top) \alpha.
	\end{equation*}
	This leads to the following result.
	
	\begin{theorem} \label{p5thm:varphi_differentiable}
		Let $\bar{u} \in \varphi^{-1}(0)$ such that \eqref{p5eq:P2_problem} has a unique solution $\bar{\alpha} \in \Delta_k$ with $\bar{\alpha}_i > 0$ for all $i \in \{1,...,k\}$. Let \eqref{p5eq:P2_problem} be uniquely solvable in a neighborhood of $\bar{u}$. Then there is an open set $U \subseteq \mathbb{R}^n$ with $\bar{u} \in U$ such that $\varphi|_U$ is continuously differentiable.
	\end{theorem}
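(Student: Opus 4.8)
The plan is to recognize $\varphi$ as the optimal value of a parametric quadratic program and to apply the implicit function theorem to its first-order optimality system. Write $M(u) := DJ^r(u)DJ^r(u)^\top - \epsilon\epsilon^\top$ and $\mathbf{1} := (1,\dots,1)^\top$, so that $\omega(\alpha) = \alpha^\top M(u)\alpha$. Since $\bar{\alpha}_i > 0$ for every $i$, the inequality constraints in \eqref{p5eq:P2_problem} are inactive at $\bar{\alpha}$, so locally only the equality constraint $\mathbf{1}^\top\alpha = 1$ is relevant. The Lagrange condition then yields a multiplier $\bar{\lambda}$ with $2M(\bar{u})\bar{\alpha} + \bar{\lambda}\mathbf{1} = 0$ (the constraint qualification is trivial, as $\mathbf{1}\neq 0$). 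I collect these into
\[
	G(\alpha,\lambda,u) := \begin{pmatrix} 2M(u)\alpha + \lambda\mathbf{1} \\ \mathbf{1}^\top\alpha - 1 \end{pmatrix},
\]
so that $G(\bar{\alpha},\bar{\lambda},\bar{u}) = 0$, and $G$ is continuously differentiable provided $DJ^r$, and hence $M$, is.

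The crux is to verify that the partial Jacobian
\[
	\partial_{(\alpha,\lambda)} G(\bar{\alpha},\bar{\lambda},\bar{u}) = \begin{pmatrix} 2M(\bar{u}) & \mathbf{1} \\ \mathbf{1}^\top & 0 \end{pmatrix}
\]
is nonsingular. A short kernel computation shows this holds as soon as $M(\bar{u})$ is positive definite on the tangent space $T := \ker\mathbf{1}^\top = \{v : \sum_i v_i = 0\}$: if $(x,y)$ lies in the kernel, then $\mathbf{1}^\top x = 0$ puts $x\in T$, and left-multiplying $2M(\bar{u})x + y\mathbf{1}=0$ by $x^\top$ gives $x^\top M(\bar{u})x = 0$, whence $x=0$ and then $y=0$. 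The main obstacle is that $M(\bar{u})$ need not be positive definite (it may even be indefinite, since $\epsilon\epsilon^\top$ is subtracted), so this definiteness on $T$ must be squeezed out of the \emph{uniqueness} hypothesis. This is where the interiority of $\bar{\alpha}$ is essential: for $v\in T$, feasibility of $\bar{\alpha} + tv$ for small $|t|$ together with the stationarity identity $M(\bar{u})\bar{\alpha}\perp T$ gives $\omega(\bar{\alpha}+tv) = \omega(\bar{\alpha}) + t^2\, v^\top M(\bar{u})v$. A value $v^\top M(\bar{u})v<0$ would contradict minimality of $\bar{\alpha}$, while a zero value for some $v\neq 0$ would yield a whole segment of minimizers, contradicting uniqueness; hence $v^\top M(\bar{u})v>0$ on $T\setminus\{0\}$, as required.

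With nonsingularity established, the implicit function theorem furnishes an open neighborhood $U$ of $\bar{u}$ and a continuously differentiable map $u\mapsto(\alpha(u),\lambda(u))$ with $G(\alpha(u),\lambda(u),u)=0$ and $(\alpha(\bar{u}),\lambda(\bar{u}))=(\bar{\alpha},\bar{\lambda})$. It remains to identify $\alpha(u)$ with the actual minimizer defining $\varphi$. By the unique-solvability hypothesis and a standard continuity argument for unique minimizers over the compact set $\Delta_k$, the global minimizer $\alpha^\ast(u)$ of \eqref{p5eq:P2_problem} depends continuously on $u$; since $\alpha^\ast(\bar{u})=\bar{\alpha}$ lies in the interior of $\Delta_k$, shrinking $U$ keeps $\alpha^\ast(u)$ interior, so it satisfies the same first-order system $G(\cdot,\cdot,u)=0$ near $(\bar{\alpha},\bar{\lambda})$ and therefore coincides with $\alpha(u)$ by the local uniqueness in the implicit function theorem. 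Consequently $\varphi(u)=\alpha(u)^\top M(u)\alpha(u)$ on $U$, a composition of continuously differentiable maps, so $\varphi|_U$ is continuously differentiable, which completes the proof.
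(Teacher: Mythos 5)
Your proof is correct and follows the same overall strategy as the paper: reduce \eqref{p5eq:P2_problem} locally to the equality-constrained stationarity system, verify that the partial Jacobian of that system with respect to $(\alpha,\lambda)$ is nonsingular, and invoke the implicit function theorem. The difference lies in how the nonsingularity is obtained. The paper derives it from Lemma \ref{p5lem:alpha_unique_2} (uniqueness of the interior minimizer is equivalent to the absence of $\beta\neq 0$ with $\omega(\beta)=0$ and $\sum_i\beta_i=0$), which in turn rests on the angle computation of Lemma \ref{p5lem:alpha_unique_1} and, for the sufficiency of the KKT conditions, on the global positive semidefiniteness result of Lemma \ref{p5lem:P2_pos_semidef}. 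You instead extract positive definiteness of $M(\bar u)$ on the tangent space $T=\ker\mathbf{1}^\top$ directly from the second-order expansion $\omega(\bar\alpha+tv)=t^2\,v^\top M(\bar u)v$ (valid by stationarity and interiority of $\bar\alpha$), ruling out negative values by minimality and zero values by uniqueness. This is more elementary and self-contained --- it is exactly the standard second-order sufficient condition for the equality-constrained quadratic program --- and it produces only what is needed (definiteness on $T$) rather than the stronger global semidefiniteness of Lemma \ref{p5lem:P2_pos_semidef}. You are also somewhat more careful than the paper in the final identification step: the paper asserts that on a neighborhood the solution is unique and positive and that the KKT system reduces to the equality system there, whereas you justify, via continuity of the unique minimizer over the compact set $\Delta_k$, that the true minimizer stays interior and close to $(\bar\alpha,\bar\lambda)$, so that it must coincide with the implicit-function branch by local uniqueness. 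Both arguments are sound; yours trades the paper's chain of auxiliary lemmas for the standard second-order machinery.
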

	\begin{proof}
		See Appendix~\ref{p5app:ProofPhiDiff}.
	\end{proof}
	
	For a standard continuation approach, we also have to show that $\varphi^{-1}(0)$ is a manifold. By the Level Set Theorem (cf.~\cite{L2012}, Corollary 5.14), to properly show that $\varphi^{-1}(0)$ is a manifold in a neighborhood of some $\bar{u} \in \varphi^{-1}(0)$, we would have to show that $D\varphi|_U(\bar{u}) \neq 0$ (cf.~\eqref{p5eq:D_phi_expl}). From the theoretical point of view, this poses a problem as there is no obvious way to achieve this. In practice however, we can test this by checking if the norm of $D\varphi|_U(\bar{u})$ is below a certain threshold. If this is the case, and if $\varphi^{-1}(0)$ is indeed not a manifold, we again have to consider all neighboring boxes as tangent boxes as in strategy 1. Otherwise, if $D\varphi|_U(\bar{u}) \neq 0$, we can compute the tangent space $T_{\bar{u}}$ of $\varphi^{-1}(0)$ at $\bar{u}$ via 
	\begin{equation*}
		T_{\bar{u}} = ker(D \varphi(\bar{u})).
	\end{equation*}
	Finally, in analogy to \eqref{p5eq:KKT_box_test} and \eqref{p5eq:KKTred_box_test}, we will use the following problem to test if a box $B$ contains part of $\partial P_2^r$:
	\begin{equation} \label{p5eq:bdryP2r_test}
		\min_{u \in B} \varphi(u)^2. \tag{$\partial \epsilon$PC-Box}
	\end{equation}		
	The resulting continuation method is presented in Algorithm \ref{p5algo:boxcon_boundary}.	
	
	\begin{remark} \label{p5rem:S2_subproblem}
		\begin{enumerate}
			\item Since for every evaluation of $\varphi$ the solution of the quadratic problem \eqref{p5eq:P2_problem} has to be computed, \eqref{p5eq:bdryP2r_test} is significantly more difficult to solve than \eqref{p5eq:KKTred_box_test}. Additionally, we are looking for the points $u$ where $\varphi(u) = 0$, i.e., where the problem \eqref{p5eq:P2_problem} is not positive definite. This increased difficulty of Strategy 2 is compensated by the fact that far fewer boxes have to be checked with \eqref{p5eq:bdryP2r_test} than with \eqref{p5eq:KKTred_box_test} in Strategy 1.
			\item When all $\epsilon_i = \bar{\epsilon}$ are equal, $\varphi(u) = -\bar{\epsilon}^2$ for all $u \in P_c^r$, i.e., $\varphi$ is constant on the Pareto critical set $P_c^r$. This means that local solvers may fail to find a minimum of $\varphi$ when the box $B$ in \eqref{p5eq:bdryP2r_test} has a nonempty intersection with $P^r$. An obvious but expensive way to circumvent this problem is to start the local solver multiple times with different initial points. Alternatively, one can use sufficient conditions for a box $B$ containing part of $\varphi^{-1}(0)$ before actually solving \eqref{p5eq:bdryP2r_test}. For example, by the intermediate value theorem, if there are two points in $B$ where $\varphi$ has different sign, we immediately know that $\varphi(u) = 0$ for some $u \in B$. (But note that for this method, we still need to find a point in $\varphi^{-1}(0) \cap B$ to be able to calculate the tangent space of $\varphi^{-1}(0)$).
			\item In practice, error bounds which are zero can cause problems for the stability of Strategy 2. For example, in Figure \ref{p5fig:simple_example}(b), the width of $P_2^r$ becomes arbitrarily small near $(1,1)^\top$. As a result, Strategy 2 may jump between different parts of the boundary and thus miss certain parts. Additionally, since the boundary of $P_2^r$ typically intersects the Pareto critical set $P_c$ in this case, \eqref{p5eq:bdryP2r_test} may be difficult to solve (as in 2.). Thus, in practice, one should use error bounds that are slightly larger than zero, even if the corresponding gradients are exact.
		\end{enumerate} \hfill$\blacksquare$
	\end{remark}
	
	\begin{algorithm} 
		\caption{Strategy 2: Boundary-Continuation Algorithm for Inexact Gradients}
		\label{p5algo:boxcon_boundary}
		\begin{algorithmic}[1] 
			\item[] \hspace{-\algorithmicindent} Given: Radius $r > 0$ of boxes.		
			\State Choose an initial point $u_0 \in \partial P_2^r$ and initialize $\mathcal{B} = \{ B(u_0,r) \}$ and a queue $Q = \{ u_0 \}$.
			\While{$Q \neq \emptyset$}
				\State Remove the first element $\bar{u}$ from $Q$.
				\State If $\left\lVert D \varphi(\bar{u}) \right\rVert_2$ is small set $T = \mathbb{R}^n$. Otherwise, compute the tangent space 
				\item[] \hspace{\algorithmicindent}$T = ker(D \varphi(\bar{u}))$.
				\item[]
				\item[] \hskip\algorithmicindent \textbf{Predictor:}
				\State Find all neighboring boxes of $B(\bar{u},r)$ that have a nonempty intersection 
				\item[] \hskip\algorithmicindent with $\bar{u} + T$ and have not been considered before, i.e., 
					\begin{equation*}
						\mathcal{B}'(\bar{u},r) = \{ B' \in \mathcal{B}(r) : B' \cap B(\bar{u},r) \neq \emptyset, \ B' \cap \bar{u} + T \neq \emptyset \} \setminus \mathcal{B}.
					\end{equation*}
				\item[] \hskip\algorithmicindent \textbf{Corrector:}
				\For{$B' \in \mathcal{B}'(\bar{u},r)$}
					\State Solve \eqref{p5eq:bdryP2r_test} for $B'$. Let $\theta(B')$ be the optimal value and $u'$ be the
					\item[]\hspace{\algorithmicindent}\hspace{\algorithmicindent}solution. 
					\If{$\theta(B') = 0$}
						\State Add $u'$ to $Q$ and $B'$ to $\mathcal{B}$.
					\EndIf
				\EndFor									
			\EndWhile
		\end{algorithmic} 
	\end{algorithm}			
	
	\subsection{Globalization approach} \label{p5sec:globalization}
	
	Note that all algorithms presented in this section so far approximate either $P_c$, $P_2^r$ or $\partial P_2^r$ by starting in an initial point $u_0$ and then locally exploring in all (tangent) directions. Thus, if the set we want to approximate is disconnected, we can only compute the connected component that contains $u_0$. In the following, we will describe how we can solve this problem, i.e., how our methods can be globalized.
	
	As mentioned earlier, an advantage of using boxes in the continuation method instead of points is the fact that it is easy to detect whether a region has already been explored. In particular, this allows us to start the continuation in multiple initial points at the same time, by simply adding all of them to the queue $Q$ in step 1 of Algorithms \ref{p5algo:boxcon_eps} or \ref{p5algo:boxcon_boundary} (and initializing the covering $\mathcal{B}$ with the corresponding boxes). As a result, to globalize our methods, we merely have to find an initial set $U_0$ of points such that the intersection of $U_0$ with each connected component is nonempty.
	
	For obtaining an initial set, we make use of the optimization problems that verify if a box contains part of the set we want to approximate, i.e., the problems \eqref{p5eq:KKT_box_test}, \eqref{p5eq:KKTred_box_test} and \eqref{p5eq:bdryP2r_test}. The idea is to consider a box covering as in \eqref{p5eq:def_boxes} with large radius $R$ and then simply test each box for relevant points using these problems.  Let $B_0$ be a compact superset of the set that we want to approximate (i.e., of $P_c$, $P_2^r$ or $\partial P_2^r$), e.g., a large outer box. For ease of notation, we assume that $B_0$ is a union of boxes in $\mathcal{B}(R)$. For the case of the Pareto critical set $P_c$, i.e., the globalization of the exact continuation method, the resulting method is presented in Algorithm \ref{p5algo:globalization}. The corresponding globalization methods for Algorithm \ref{p5algo:boxcon_eps} and \ref{p5algo:boxcon_boundary} are obtained by replacing \eqref{p5eq:KKT_box_test} in step 3 by \eqref{p5eq:KKTred_box_test} and \eqref{p5eq:bdryP2r_test}, respectively.
		
	\begin{algorithm} 
		\caption{Global Initialization}
		\label{p5algo:globalization}
		\begin{algorithmic}[1] 
			\item[] \hspace{-\algorithmicindent} Given: Outer box $B_0$, Radius $R > 0$ of boxes.
			\State Initialize $U_0 = \emptyset$.
			\For{$B \in \mathcal{B}(R)$ with $B \cap B_0 \neq \emptyset$}		
				\State Solve \eqref{p5eq:KKT_box_test} for $B$. Let $\theta(B)$ be the optimal value and $\bar{u}$ be the solution. 
				\If{$\theta(B) = 0$}
					\State Add $\bar{u}$ to $U_0$. 
				\EndIf
			\EndFor
		\end{algorithmic} 
	\end{algorithm}			
		
	The radius $R$ has to be chosen such that for each connected component, there is at least one box in our covering that only has an intersection with the desired component. In theory, $R$ can obviously become very small if two different connected components are very close to each other. In this case, Algorithm \ref{p5algo:globalization} becomes infeasible to use, as the number of boxes that have to be tested becomes too large. In practice however, the components are often sufficiently far apart such that a large radius is sufficient and only few boxes have to be considered.

	For the globalization of the exact continuation method and Algorithm \ref{p5algo:boxcon_eps}, we only have to take the non-connectivity of $P_c$ and $P_2^r$ into account. For Algorithm \ref{p5algo:boxcon_boundary}, an additional problem may arise since the boundary $\partial P_2^r$ does not necessarily need to be smooth. Non-smoothness of $\partial P_2^r$ is caused by points in which $\varphi$ is not differentiable. (By Theorem \ref{p5thm:varphi_differentiable}, these are points where the solution of \eqref{p5eq:bdryP2r_test} is not unique.) In these points, $\partial P_2^r$ does not posses a tangent space, and our method will be unable to continue. As a result, we have to ensure in the initialization of Algorithm \ref{p5algo:boxcon_boundary} that we choose an initial point in $U_0$ on each smooth component of $\partial P_2^r$. Visually, these can be thought of as the faces of $P_2^r$.
	
	We conclude this section with some remarks on the practical use of Algorithm \ref{p5algo:globalization}.
	
	\begin{remark}
		\begin{enumerate}
			\item For MOPs with a high-dimensional variable space, Algorithm \ref{p5algo:globalization} quickly becomes infeasible due to the exponential growth of the number of boxes in $\mathcal{B}(R)$. For these cases, an initialization based on points instead of boxes should be used, for example by applying methods from global optimization to modified versions of \eqref{p5eq:KKT_box_test}, \eqref{p5eq:KKTred_box_test} and \eqref{p5eq:bdryP2r_test}, where $u$ is not constrained to a box $B$.
			\item Instead of directly looping over all boxes in step 2 of Algorithm \ref{p5algo:globalization}, in some cases it might be more beneficial to first execute a few steps of the subdivision algorithm (cf.~\cite{DSH2005}) to quickly discard boxes that are far away from the Pareto critical set.
		\end{enumerate} \hfill$\blacksquare$
		
	\end{remark}
	
		\section{Multiobjective optimization of an elliptic PDE using the RB method}	
	\label{p5sec:MultiobjectiveParameterOptimizationWithRB}
	
	In this section we will present a multiobjective (parameter) optimization problem of an elliptic advection-diffusion-reaction equation and show how the reduced basis method can be applied in view of the continuation method for inexact gradients from Section \ref{p5subsec:ContinuationMethodInexact} (see Algorithms \ref{p5algo:boxcon_eps} and \ref{p5algo:boxcon_boundary}).
	
	\subsection{Multiobjective optimization of an elliptic PDE}
	\label{p5subsec:MultiobjectiveParameterOptimization}
	 
	Given a domain $\Omega \subset \mathbb{R}^d$, $d \in \{2,3\}$,	we consider the problem
	\begin{align}
		\min_{y,u} \; \mathcal{J}(y,u) := \left( \begin{array}{c} \frac{1}{2} \left\Vert y - y^1_d \right\Vert_{L^2(\Omega)}^2 \\
		\vdots \\
		\frac{1}{2} \left\Vert y - y^{k-1}_d \right\Vert_{L^2(\Omega)}^2 \\
		\frac{1}{2} \left\Vert u \right\Vert_{\mathbb{R}^{m}}^2
	\end{array} \right)  \tag{MPOP} \label{p5eq:MPOP}
	\end{align}
	s.t.
	\begin{align}    
	\begin{array}{r l l}
		- \sum_{i=1}^{m'} \kappa_i \chi_{\Omega_i}(x) \Delta y(x) + c \, b(x) \cdot \nabla y(x) + r \, y(x) & = f(x) 			&\text{for } x \in \Omega, \\
		\frac{\partial y}{\partial \eta} (x) & = 0 &\text{for } x \in \partial\Omega,
	\end{array} \tag{EPDE} \label{p5eq:EllipticPDE}
	\end{align}
	and the bilateral box constraints
	\begin{align}
	u_a \leq u \leq u_b, \tag{BC} \label{p5eq:BoxConstraints}
	\end{align}
	where  $u = (u_1,\ldots,u_{m}) = (\kappa_1,\ldots,\kappa_{m'},c,r) \in \mathbb{R}^m$ is the parameter of dimension $m := m' + 2$, $U_{\textsl{ad}} := \{ u \in \mathbb{R}^m \mid u_a \leq u \leq u_b \}$ is the admissible parameter set, and $y \in L^2(\Omega) =: H$ is the state variable. \\
	The domain $\Omega$ is divided into $m'$ pairwise disjoint subdomains $\Omega = \Omega_1 \dot\cup \ldots \dot\cup \, \Omega_{m'}$, such that $\kappa_i$ is the diffusion coefficient on $\Omega_i$. The vector field $b \in L^{\infty}(\Omega,\mathbb{R}^d)$ is the given advection, whose strength and orientation can be controlled by the parameter $c \in \mathbb{R}$. Moreover, the reaction coefficient is given by the parameter $r > 0$, and $f \in H$ is the inhomogeneity on the right-hand side of the equation. On the boundary we impose homogeneous Neumann boundary conditions. \\
	The cost functions $\mathcal{J}_1,\ldots,\mathcal{J}_{k-1} : H \times U_{\textsl{ad}} \to \mathbb{R}^k$ are of tracking type with respect to the desired states $y_d^1,\ldots,y_d^{k-1} \in H$, and the cost function $\mathcal{J}_k : H \times U_{\textsl{ad}} \to \mathbb{R}^k$ measures the parameter cost. \\
	Setting $V := H^1(\Omega)$ and using the parameter-dependent bilinear form $a(u;\cdot,\cdot): V \times V \to \mathbb{R}$ defined by 
	\begin{align*}
	a(u,\varphi,\psi) := & \sum_{i=1}^m u_i a_i(\varphi,\psi) \\
	:= & \sum_{i=1}^{m'} \kappa_i \int_{\Omega_i} \nabla \varphi(x) \cdot \nabla \psi(x) \, dx + c \int_\Omega b(x) \cdot \nabla \varphi(x) \psi(x) \, dx \\ 
	& + r \int_{\Omega} \varphi(x) \psi(x) \, dx,
	\end{align*}
	for all $u \in U_{\textsl{ad}}$ and $\varphi,\psi \in V$, and the linear functional $F: V \to \mathbb{R}$ given by $F(\varphi) := \langle f , \varphi \rangle_H$ for all $\varphi \in V$,		
	we can write \eqref{p5eq:EllipticPDE} in its weak formulation as: Find $y \in V$ such that
	\begin{align}
	a(u;y,\varphi) = F(\varphi) \quad \text{for all } \varphi \in V \label{p5eq:WeakFormulationPDE}
	\end{align}
	is satisfied.
	It is possible to show the unique solvability of \eqref{p5eq:WeakFormulationPDE} under some conditions on the parameter $u$.
	
	\begin{theorem}
	\label{p5thm:UniqueSolvabilityPDE}
	There are $\kappa_{\textsl{min}} \in (0,\infty)^{m'}$, $c_{\textsl{min}}, c_{\textsl{max}} \in \mathbb{R}$ with $c_{\textsl{min}} < c_{\textsl{max}}$ and $r_{\textsl{min}} \in (0,\infty)$ such that \eqref{p5eq:WeakFormulationPDE} has a unique solution $y(u) \in V$ for every parameter $u = \left(\kappa,c,r\right) \in \mathbb{R}^m$ with $\kappa > \kappa_{\textsl{min}}$, $c_{\textsl{min}} < c < c_{\textsl{max}}$ and $r > r_{\textsl{min}}$.
	\end{theorem}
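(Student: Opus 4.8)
The plan is to apply the Lax--Milgram lemma to the bilinear form $a(u;\cdot,\cdot)$ and the functional $F$ on $V=H^1(\Omega)$ for each fixed admissible parameter $u$. The right-hand side is immediate: since $f\in H=L^2(\Omega)$, the map $F(\varphi)=\langle f,\varphi\rangle_H$ is linear with $|F(\varphi)|\le \|f\|_{L^2(\Omega)}\,\|\varphi\|_V$, so $F\in V'$. Bilinearity of $a(u;\cdot,\cdot)$ is clear from its definition, and boundedness is routine: applying Cauchy--Schwarz to each of the three integrals and using $b\in L^\infty(\Omega,\mathbb{R}^d)$ gives $|a(u;\varphi,\psi)|\le M(u)\,\|\varphi\|_V\|\psi\|_V$ with a constant $M(u)$ depending only on $\max_i\kappa_i$, $|c|$, $r$ and $\|b\|_{L^\infty}$. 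The crux of the proof is therefore coercivity.

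First I would bound the diffusion and reaction contributions from below by
\[
\sum_{i=1}^{m'}\kappa_i\int_{\Omega_i}|\nabla\varphi|^2\,dx \;\ge\; \Big(\min_{1\le i\le m'}\kappa_i\Big)\,\|\nabla\varphi\|_{L^2(\Omega)}^2, \qquad r\int_\Omega\varphi^2\,dx = r\,\|\varphi\|_{L^2(\Omega)}^2 .
\]
The difficulty is the advection term $c\int_\Omega (b\cdot\nabla\varphi)\,\varphi\,dx$, which is not sign-definite and cannot be removed by integration by parts, since $b$ is only assumed to lie in $L^\infty$ (no divergence or normal-trace hypothesis is available). I would control it by Cauchy--Schwarz together with Young's inequality: for any $\delta>0$,
\[
\Big|\,c\int_\Omega (b\cdot\nabla\varphi)\,\varphi\,dx\,\Big| \le |c|\,\|b\|_{L^\infty}\,\|\nabla\varphi\|_{L^2(\Omega)}\,\|\varphi\|_{L^2(\Omega)} \le |c|\,\|b\|_{L^\infty}\Big(\tfrac{\delta}{2}\|\nabla\varphi\|_{L^2(\Omega)}^2 + \tfrac{1}{2\delta}\|\varphi\|_{L^2(\Omega)}^2\Big).
\]
Combining the three estimates yields
\[
a(u;\varphi,\varphi)\ge \Big(\min_i\kappa_i-\tfrac{\delta}{2}|c|\,\|b\|_{L^\infty}\Big)\|\nabla\varphi\|_{L^2(\Omega)}^2 + \Big(r-\tfrac{1}{2\delta}|c|\,\|b\|_{L^\infty}\Big)\|\varphi\|_{L^2(\Omega)}^2 .
\]

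Both bracketed coefficients can be made simultaneously positive exactly when some $\delta$ lies in the interval $\big(\tfrac{|c|\,\|b\|_{L^\infty}}{2r},\,\tfrac{2\min_i\kappa_i}{|c|\,\|b\|_{L^\infty}}\big)$, which is nonempty if and only if $\big(|c|\,\|b\|_{L^\infty}\big)^2 < 4\,r\,\min_i\kappa_i$. This is where the parameter restrictions enter. I would first fix any $c_{\textsl{min}}<c_{\textsl{max}}$, put $C:=\max\{|c_{\textsl{min}}|,|c_{\textsl{max}}|\}$ so that $|c|\le C$ on the admissible range, and then choose $\kappa_{\textsl{min}}$ and $r_{\textsl{min}}$ large enough that $4\,r_{\textsl{min}}\,\min_i(\kappa_{\textsl{min}})_i > \big(C\,\|b\|_{L^\infty}\big)^2$. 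Fixing a single $\delta$ in the interval determined by these worst-case bounds, monotonicity of the two coefficients in $\kappa$, $r$ and $|c|$ shows that for every $u$ with $\kappa>\kappa_{\textsl{min}}$, $c_{\textsl{min}}<c<c_{\textsl{max}}$ and $r>r_{\textsl{min}}$ the coefficients stay bounded below by a single $\gamma>0$, so $a(u;\varphi,\varphi)\ge\gamma\,\|\varphi\|_V^2$ uniformly on the parameter range. With coercivity, boundedness and $F\in V'$ established, the Lax--Milgram lemma delivers a unique $y(u)\in V$ solving \eqref{p5eq:WeakFormulationPDE} for each such $u$, which proves the claim. The main obstacle is precisely the indefinite advection term; once it is absorbed into the diffusion and reaction terms, the remainder is the standard elliptic well-posedness argument.
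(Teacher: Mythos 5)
Your proof is correct and follows the same route as the paper, which simply asserts continuity and coercivity of $a(u;\cdot,\cdot)$ under suitable parameter restrictions and then invokes Lax--Milgram. Your write-up fills in the details the paper leaves as "straightforward," in particular the Young-inequality absorption of the advection term leading to the condition $\bigl(|c|\,\|b\|_{L^\infty}\bigr)^2 < 4\,r\,\min_i\kappa_i$ and the uniform choice of $\delta$ over the admissible parameter range.
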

	
	\begin{proof}
	It is straightforward to show that for all parameters $u \in \mathbb{R}^m$ the bilinear form $a(u;\cdot,\cdot)$ and the linear functional $F$ are continuous, and that there are $\kappa_{\textsl{min}} \in (0,\infty)^{m'}$, $c_{\textsl{min}}, c_{\textsl{max}} \in \mathbb{R}$ with $c_{\textsl{min}} < c_{\textsl{max}}$ and $r_{\textsl{min}} \in (0,\infty)$ such that $a(u;\cdot,\cdot)$ is coercive for all $u = \left(\kappa,c,r\right) \in \mathbb{R}^m$ with $\kappa > \kappa_{\textsl{min}}$, $c_{\textsl{min}} < c < c_{\textsl{max}}$ and $r > r_{\textsl{min}}$. Now the Lax-Milgram Theorem can be applied to show the unique solvability of \eqref{p5eq:WeakFormulationPDE}.
	\end{proof}
	
	With Theorem \ref{p5thm:UniqueSolvabilityPDE} in mind we can introduce the solution operator of the elliptic PDE.
	 
	\begin{definition}
	\label{p5def:SolutionOperatorStateEquation}
	Define the set $U_{\textsl{eq}} := (\kappa_{\textsl{min}},\infty) \times (c_{\textsl{min}}, c_{\textsl{max}}) \times (r_{\textsl{min}},\infty)$ with the constants from Theorem \ref{p5thm:UniqueSolvabilityPDE}. Let $\mathcal{S}: U_{\textsl{eq}} \to V \hookrightarrow H$ be defined as the solution operator of \eqref{p5eq:WeakFormulationPDE}, i.e., the function $y := \mathcal{S}(u)$ solves the weak formulation \eqref{p5eq:WeakFormulationPDE} for any parameter $u \in U_{\textsl{eq}}$.
	\end{definition}
	
	\begin{remark}
	In the following we suppose that it holds $U_{\textsl{ad}} \subset U_{\textsl{eq}}$. \hfill$\blacksquare$
	\end{remark}
		
	Using the explicit dependence of the state $y$ on the parameter $u$ for all $u \in U_{\textsl{ad}}$, the essential cost functions $J_1,\ldots,J_k: U_{\textsl{ad}} \to \mathbb{R}$ can be defined.
	
	\begin{definition}
	\label{p5def:ReducedCostFunctions}
	For any $i \in \{1,\ldots,k\}$ let the essential cost function $J_i:U_{\textsl{ad}} \to \mathbb{R}$ be given by $J_i(u) := \mathcal{J}_i(\mathcal{S}(u),u)$ for all $u \in U_{\textsl{ad}}$.
	\end{definition}
	
	For applying the continuation method from Section \ref{p5sec:ContinuationMethodWithInexactness}, which is based on Theorem \ref{p5thm:hillermeier}, to solve this multiobjective parameter optimization problem, the cost functions $J_1,\ldots,J_k$ need to be twice continuously differentiable. This is the statement of the next lemma.
	
	\begin{lemma}
	\label{p5lem:CostFunctionsTwiceContDiff}
	The cost functions $J_1,\ldots,J_k$ are twice continuously differentiable.
	\end{lemma}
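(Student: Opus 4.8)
The plan is to first establish that the solution operator $\mathcal{S}$ is twice continuously differentiable and then obtain the claim for $J_i = \mathcal{J}_i(\mathcal{S}(\cdot),\cdot)$ via the chain rule. The key structural observation is that the bilinear form $a(u;\cdot,\cdot) = \sum_{i=1}^m u_i\, a_i(\cdot,\cdot)$ depends \emph{linearly} on the parameter $u$, so the state equation is in fact smooth in $u$; this is what makes the differentiability cheap.

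To exploit this, I would introduce the state operator $e : V \times U_{\textsl{eq}} \to V^*$ defined by $\langle e(y,u), \varphi \rangle = a(u;y,\varphi) - F(\varphi)$ for all $\varphi \in V$, so that $\mathcal{S}(u)$ is characterized implicitly by $e(\mathcal{S}(u),u) = 0$. Since $e$ is affine in $y$ and linear in $u$ (jointly bilinear plus a constant), it is infinitely Fréchet differentiable. Its partial derivative with respect to the state, $\langle \partial_y e(y,u)\,\delta y, \varphi \rangle = a(u;\delta y,\varphi)$, is precisely the operator associated with the coercive and bounded bilinear form $a(u;\cdot,\cdot)$; for $u \in U_{\textsl{eq}}$ the Lax--Milgram theorem (already invoked in Theorem \ref{p5thm:UniqueSolvabilityPDE}) guarantees that $\partial_y e(y,u) \in \mathcal{L}(V,V^*)$ is a boundedly invertible isomorphism.

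With these two ingredients, the implicit function theorem on Banach spaces yields that $\mathcal{S} : U_{\textsl{eq}} \to V$ is as regular as $e$, hence $C^\infty$, and in particular twice continuously differentiable on $U_{\textsl{ad}} \subset U_{\textsl{eq}}$. An explicit alternative is to note that the first sensitivity $y_j := \partial_{u_j}\mathcal{S}(u)$ is the unique solution of $a(u; y_j, \varphi) = -a_j(\mathcal{S}(u),\varphi)$ for all $\varphi \in V$ (again well posed by Lax--Milgram), with the higher derivatives handled analogously.

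It remains to compose with the cost functions. For $i \in \{1,\ldots,k-1\}$ the map $\mathcal{J}_i(y,u) = \tfrac{1}{2}\|y - y_d^i\|_{L^2(\Omega)}^2$ is a quadratic, hence $C^\infty$, functional on the Hilbert space $H$; composing it with the continuous embedding $V \hookrightarrow H$ and with $\mathcal{S} \in C^2(U_{\textsl{ad}},V)$ gives $J_i \in C^2(U_{\textsl{ad}})$ by the chain rule. The remaining cost function $\mathcal{J}_k(y,u) = \tfrac{1}{2}\|u\|_{\mathbb{R}^m}^2$ depends smoothly on $u$ alone and is trivially twice continuously differentiable. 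I expect the only genuine work to be the verification of the implicit function theorem hypotheses in the Banach-space setting — in particular the bounded invertibility of $\partial_y e$ — but this reduces directly to the coercivity already secured in Theorem \ref{p5thm:UniqueSolvabilityPDE}, so no new estimate is needed.
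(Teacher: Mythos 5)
Your proposal is correct and follows essentially the same route as the paper: the paper's proof also rewrites the weak formulation as $e(y,u)=0$ and invokes the implicit function theorem (citing \cite[Section 1.6]{Hinze2009}) to get twice continuous differentiability of $\mathcal{S}$, then concludes for $J_1,\ldots,J_{k-1}$ by composition and treats $J_k$ as trivially smooth. You simply supply the details (bilinearity of $e$, invertibility of $\partial_y e$ via Lax--Milgram) that the paper delegates to the reference.
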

	
	\begin{proof}
	It is clear that the cost function $J_k$ is twice continuously differentiable. Furthermore, it is possible to show that the solution operator $\mathcal{S}$ of \eqref{p5eq:WeakFormulationPDE} is twice continuously differentiable (this can be shown by rewriting \eqref{p5eq:WeakFormulationPDE} in the form $e(y,u) = 0$ and then using the implicit function theorem, cf.~\cite[Section 1.6]{Hinze2009}). From this it immediately follows that the cost functions $J_1,\ldots,J_{k-1}$ are twice continuously differentiable as well.
	\end{proof}
	
	For later use, we need an explicit formula for the gradients $\nabla J_1,\ldots,\nabla J_k$. Therefore, we introduce the so-called adjoint equation for all $i \in \{1,\ldots,k-1\}$: Find $p \in V$ such that it holds
	\begin{align}
	a(u;\varphi,p) = \langle y_d^i - \mathcal{S}(u), \varphi \rangle_H \quad \text{for all } \varphi \in V. \label{p5eq:AdjointEquation}
	\end{align}
	With the same arguments as in Theorem \ref{p5thm:UniqueSolvabilityPDE} it is possible to show that \eqref{p5eq:AdjointEquation} has a unique solution for all $u \in U_{\textsl{eq}}$.
	
	\begin{definition}
	\label{p5def:SolutionOperatorAdjointEquation}
	Denote by $\mathcal{A}_i: U_{\textsl{eq}} \to V \hookrightarrow H$ the solution operator of the adjoint equation \eqref{p5eq:AdjointEquation} for all $i \in \{1,\ldots,k-1\}$.
	\end{definition}
	
	Now a small computation shows that 
	\begin{align*}
	J_i'(u) h = \langle \mathcal{S}(u) - y_d^i , \mathcal{S}'(u)h \rangle_H = \partial_u a(u;\mathcal{S}(u),\mathcal{A}_i(u))h,
	\end{align*}
	which yields
	\begin{align}
	\nabla J_i(u) = \left( \begin{array}{c}
									\partial_{u} a(u;\mathcal{S}(u),\mathcal{A}_i(u)) e_1 \\
									\vdots \\
									\partial_{u} a(u;\mathcal{S}(u),\mathcal{A}_i(u)) e_m
								 \end{array} \right) = \left( \begin{array}{c}
						a_1(\mathcal{S}(u),\mathcal{A}_i(u)) \\
									\vdots \\
						a_m(\mathcal{S}(u),\mathcal{A}_i(u))
								 \end{array} \right) \label{p5eq:RepresentationGradient}
	\end{align}
for all $i \in \{1,\ldots,k-1\}$. Lastly, it is obvious that $\nabla J_k(u) = u$. 

	\subsection{The reduced basis method}
	\label{p5subsec:RB}
	For computing the Pareto critical set of the problem \eqref{p5eq:MPOP} by the exact continuation method introduced in Section \ref{p5subsec:ExactContinuation}, the problem \eqref{p5eq:KKT_box_test} has to be solved numerous times. However, already one gradient evaluation of all cost functions $\nabla J_1(u),\ldots,\nabla J_k(u)$ involves the solution of one state and $k-1$ adjoint equations. Thus, using a finite element discretization for the weak formulations \eqref{p5eq:WeakFormulationPDE} and \eqref{p5eq:AdjointEquation}, which leads to large linear equation systems, is numerically very costly and time consuming. Therefore, the use of \textit{reduced-order modelling} (ROM) is a common tool to lower the computational costs. \\ 
The idea of ROM is to use a low-dimensional subspace $V^r \subset V$ as a surrogate for the infinite-dimensional space $V$ in the weak formulations \eqref{p5eq:WeakFormulationPDE} and \eqref{p5eq:AdjointEquation}. 	
Given a finite-dimensional reduced-order space $V^r \subset V$, the reduced-order state equation reads:	Find $y^r \in V^r$ such that
	\begin{align}
	a(u;y^r,\varphi) = F(\varphi) \quad \text{for all } \varphi \in V^r \label{p5eq:WeakFormulationRBStateEquation}
	\end{align}
	is satisfied. \\
	With the same arguments as in Theorem \ref{p5thm:UniqueSolvabilityPDE} it can be shown that \eqref{p5eq:WeakFormulationRBStateEquation} has a unique solution for all $u \in U_{\textsl{eq}}$. Therefore, we can follow the procedure of Section \ref{p5subsec:MultiobjectiveParameterOptimization} and introduce the solution operator $\mathcal{S}^r: U_{\textsl{eq}} \to V^r \subset V \hookrightarrow H$ of the ROM state equation \eqref{p5eq:WeakFormulationRBStateEquation} and consequently the ROM essential cost functions $J_1^r,\ldots,J_k^r$, which are defined by $J_i^r(u) := \mathcal{J}_i(\mathcal{S}^r(u),u)$ for all $u \in U_{\textsl{ad}}$ and all $i \in \{1,\ldots,k\}$. Again, it can be shown that the functions $J_1^r,\ldots,J_k^r$ are twice continuously differentiable so that they fit into the framework of Theorem \ref{p5thm:hillermeier}. The gradient of the cost functions can also be displayed by the reduced-order adjoint equations
	\begin{align}
	a(u;\varphi,p^r) = \langle y_d^i - \mathcal{S}^r(u), \varphi \rangle_H \quad \text{for all } \varphi \in V^r, \label{p5eq:RBAdjointEquation}
	\end{align}
	for all $i \in \{1,\ldots,k-1\}$, whose solution operator we denote by $\mathcal{A}_i^r : U_{\textsl{eq}} \to V^r \subset V \hookrightarrow H$. With this definition it holds 
	\begin{align}
	\nabla J_i^r(u) = \left( \begin{array}{c}
									\partial_{u} a(u;\mathcal{S}^r(u),\mathcal{A}^r_i(u)) e_1 \\
									\vdots \\
									\partial_{u} a(u;\mathcal{S}^r(u),\mathcal{A}^r_i(u)) e_m
								 \end{array} \right) = \left( \begin{array}{c}
						a_1(\mathcal{S}^r(u),\mathcal{A}^r_i(u)) \\
									\vdots \\
						a_m(\mathcal{S}^r(u),\mathcal{A}^r_i(u))
								 \end{array} \right) \label{p5eq:RepresentationGradientRB}
	\end{align}
	for all $i \in \{1,\ldots,k-1\}$. Moreover, we have $\nabla J_k^r(u) = u = \nabla J_k(u)$. \vspace{\baselineskip}
	
	In this paper we use a particular model-order reduction technique, namely the \textit{reduced basis} (RB) method (see e.g. \cite{Rozza2007,Hesthaven2016,Quarteroni2016}). In the RB method the snapshot space $V^r$ is spanned by solutions of the state equation and the adjoint equations to different parameter values $u \in U_{\textsl{ad}}$. The reduced basis is then given by an orthonormal basis $(\Phi_1,\ldots,\Phi_N)$ of the space $V^r$. \\ 
By using the RB method we introduce an error in the state equation, which transfers to the cost functions, its gradients and eventually to the Pareto critical set, which we want to compute. In Section \ref{p5subsec:ContinuationMethodInexact} two strategies were presented to deal with the inflicted inexactness in the gradients of the multiobjective optimization problem. Both are based on the estimates \eqref{p5eq:error_bounds} for the errors in the gradients of the cost functions. Thus, when applying the RB method we need to ensure these estimates. This is done by using the well-known greedy algorithm (cf.~\cite{Buffa2012}). Given a sufficiently fine finite parameter training set $\mathcal{P} \subset U_{\textsl{ad}}$ new solution snapshots are computed until the error in the gradients of all cost functions is smaller than the predefined error tolerance for all parameters in $\mathcal{P}$. The parameter for the new snapshots is thereby chosen as the one for which the error in the gradient is the largest. The procedure is summarized in Algorithm \ref{p5algo:GreedyAdjoint}.
	
	\begin{algorithm} 
		\caption{Greedy Algorithm}
		\label{p5algo:GreedyAdjoint}
		\begin{algorithmic}[1]
			\item[] \hspace{-\algorithmicindent} Given: Parameter set $\mathcal{P} \subset U_{\textsl{ad}}$, greedy tolerances $\varepsilon_1,\ldots,\varepsilon_k > 0$.
			\State Choose $u \in \mathcal{P}$, compute $\mathcal{S}(u)$,$\mathcal{A}_1(u),\ldots,\mathcal{A}_{k-1}(u)$.
			\State Set $V^r =  \textsl{span} \{ \mathcal{S}(u), \mathcal{A}_1(u),\ldots,\mathcal{A}_{k-1}(u) \}$ and compute the reduced basis by orthonormalization.
			\While{$\max_{u \in \mathcal{P}} \max_{i \in \{1,\ldots,k-1\}} \left\lVert \nabla J_i(u) - \nabla J_i^r(u) \right\rVert_2 > \epsilon_i$}
				\State Choose $(\bar{u},i) = \arg\max_{u \in \mathcal{P}, \, i \in \{1,\ldots,k-1\}} \left\lVert \nabla J_i(u) - \nabla J_i^r(u) \right\rVert_2$.
				\State Compute $\mathcal{S}(\bar{u})$ and $\mathcal{A}_i(\bar{u})$.
				\State Set $V^r =  \textsl{span} \left\{ V^r \cup \{ \mathcal{S}(\bar{u}), \mathcal{A}_i(\bar{u}) \} \right\}$ and compute the reduced basis by
				\item[] \hskip\algorithmicindent orthonormalization.
			\EndWhile
		\end{algorithmic} 
	\end{algorithm}
	
	\subsection{Error estimation for the gradients}
	
	In the greedy procedure in Algorithm \ref{p5algo:GreedyAdjoint}, the error between the full-order and the reduced-order gradients has to be evaluated. There are two strategies to do so.
	\begin{enumerate}
	\item The full-order gradients are computed and stored at the beginning of the greedy procedure. Therefore, in each greedy iteration, only the reduced-order gradients have to be computed and the error can be easily evaluated. Of course, this implies large computational costs at the beginning of the greedy procedure. This method is called strong greedy algorithm (cf.~\cite{Haasdonk2013,Buffa2012}).
	\item An a-posteriori error estimator for the errors in the gradient is used, which can be efficiently evaluated. This results in computational costs for the greedy algorithm, which only depend on the reduced-order dimension $N$. 
	\end{enumerate}
	To be able to follow the second strategy we introduce a rigorous a-posteriori error estimator for the error in the gradient of the cost functions. \\
	Using the gradient representations \eqref{p5eq:RepresentationGradient} and \eqref{p5eq:RepresentationGradientRB} we can write for $i \in \{1,\ldots,k-1\}$
	\begin{align*}
	\left\lVert \nabla J_i(u) - \nabla J^r_i(u) \right\rVert_2^2 = \sum_{j=1}^{m} \left| a_j(\mathcal{S}(u),\mathcal{A}_i(u)) - a_j(\mathcal{S}^r(u),\mathcal{A}^r_i(u)) \right|^2.
\end{align*}		
	Due to the bilinearity and the continuity of $a_1,\ldots,a_m$ and the triangle inequality, we can further write
	\begin{align}
& \left| a_j(\mathcal{S}(u),\mathcal{A}_i(u)) - a_j(\mathcal{S}^r(u),\mathcal{A}^r_i(u)) \right| \nonumber \\
	\leq & \left| a_j(\mathcal{S}(u) - \mathcal{S}^r(u),\mathcal{A}^r_i(u)) \right| + \left| a_j(\mathcal{S}(u) - \mathcal{S}^r(u),\mathcal{A}_i(u)-\mathcal{A}^r_i(u)) \right| \nonumber \\
	& + \left| a_j(\mathcal{S}^r(u),\mathcal{A}_i(u)-\mathcal{A}^r_i(u)) \right| \label{p5eq:GradientEstimateTriangleInequality} \\
	\leq & C_j \left( \left\Vert \mathcal{S}(u) - \mathcal{S}^r(u) \right\Vert_V \left\Vert \mathcal{A}^r_i(u) \right\Vert_V + \left\Vert \mathcal{S}(u) - \mathcal{S}^r(u) \right\Vert_V \left\Vert \mathcal{A}_i(u) - \mathcal{A}^r_i(u) \right\Vert_V \right. \nonumber \\
	& \left. \qquad + \left\Vert \mathcal{S}^r(u) \right\Vert_V \left\Vert \mathcal{A}_i(u) - \mathcal{A}^r_i(u) \right\Vert_V \right) \label{p5eq:GradientEstimateFinal}
	\end{align}
	for all $j \in \{1,\ldots,m\}$. \\
	Therefore, we need a-posteriori error estimators for the state and the adjoint equations in order to be able to estimate the approximation error induced in the gradients. To this end, we use the following well-known estimators (cf. \cite{Rozza2007}).
	\begin{align*}
	\left\Vert \mathcal{S}(u) - \mathcal{S}^r(u) \right\Vert_V & \leq \frac{\left\Vert r_\mathcal{S}(u) \right\Vert_{V'}}{\alpha(u)} =: \Delta_{\mathcal{S}}(u), \\
	\left\Vert \mathcal{A}_i(u) - \mathcal{A}^r_i(u) \right\Vert_V  & \leq \frac{\left\Vert r_{\mathcal{A}_i}(u) \right\Vert_{V'}}{\alpha(u)} + \Delta_{\mathcal{S}}(u) =: \Delta_{\mathcal{A}_i}(u),
	\end{align*}
	where the residuals $r_\mathcal{S}(u)$ and $r_{\mathcal{A}_i}(u)$ are given by 
	\begin{align*}
	\langle r_\mathcal{S}(u) , \varphi \rangle_{V',V} & := F(\varphi) - a(u;\mathcal{S}^r(u),\varphi) \quad & \text{for all } \varphi \in V, \\
	\langle r_{\mathcal{A}_i}(u) , \varphi \rangle_{V',V} & := \langle y_d^i - \mathcal{S}^r(u) , \varphi \rangle_H - a(u;\varphi,\mathcal{A}_i^r(u)) \quad & \text{for all } \varphi \in V. \\
	\end{align*}
	For methods on how to estimate $\alpha(u)$ and to evaluate the terms $\left\Vert r_\mathcal{S}(u) \right\Vert_{V'}$ and $\left\Vert r_{\mathcal{A}_i}(u) \right\Vert_{V'}$ efficiently, we refer for example to \cite{Rozza2007}.
	
	\begin{remark}
	Since $J_k = J_k^r$, the gradients of the two functions also coincide, so that the $\nabla J_k$ is approximated exactly by $\nabla J_k^r$. \hfill$\blacksquare$
	\end{remark}

	\section{Numerical results} \label{p5sec:NumericalResults}
	
	In this section we will numerically investigate the application of the continuation method presented in Section \ref{p5sec:ContinuationMethodWithInexactness} to the PDE-constrained multiobjective optimization problem using the reduced basis method in Section \ref{p5sec:MultiobjectiveParameterOptimizationWithRB}. \\
	For the discretization of the state and adjoint equations we used linear finite elements with 714 degrees of freedom.
	
	\subsection{Generation of the reduced basis}
	
	For investigating the generation of the reduced basis by the greedy algorithm in Algorithm \ref{p5algo:GreedyAdjoint}, we consider the MPOP
	\begin{align}
		 \left( \begin{array}{c}
		 J_1(u) \\
		 J_2(u)
\end{array}	\right)	  = \left( \begin{array}{c} \frac{1}{2} \left\Vert \mathcal{S}(u) - y^1_d \right\Vert_H^2 \\
		\frac{1}{2} \left\Vert u \right\Vert_{\mathbb{R}^{4}}^2
	\end{array} \right) \label{p5eq:MPOPExampleRBGeneration}
	\end{align}	
	 with $u = (\kappa_1,\kappa_2,c,r)$, $\Omega_1 = (0,1) \times (0,0.5)$, $\Omega_2 = (0,1) \times (0.5,1)$, and the admissible parameter set 
	 \[
	 U_{ad} = \{ u = (\kappa_1,\kappa_2,c,r) \in \mathbb{R}^4 \mid 0.2 \leq \kappa_i \leq 5 \, (i = 1,2), \; c = 0, \; r = 0.5 \}.
	 \]
	
	The reason for setting $c = 0$ in this example is that the coercivity constant $\alpha(u)$ of the bilinear form $a(u;\cdot,\cdot)$ is explicitly given by $\alpha(u) = \min \{ \kappa_1,\kappa_2,r \}$ for all $u \in U_{ad}$, so that we expect a good efficiency of the error estimator of both the state and adjoint equations. 
		\begin{figure}[h!] 
			\parbox[b]{0.49\textwidth}{
				\centering 
				\includegraphics[width=0.425\textwidth]{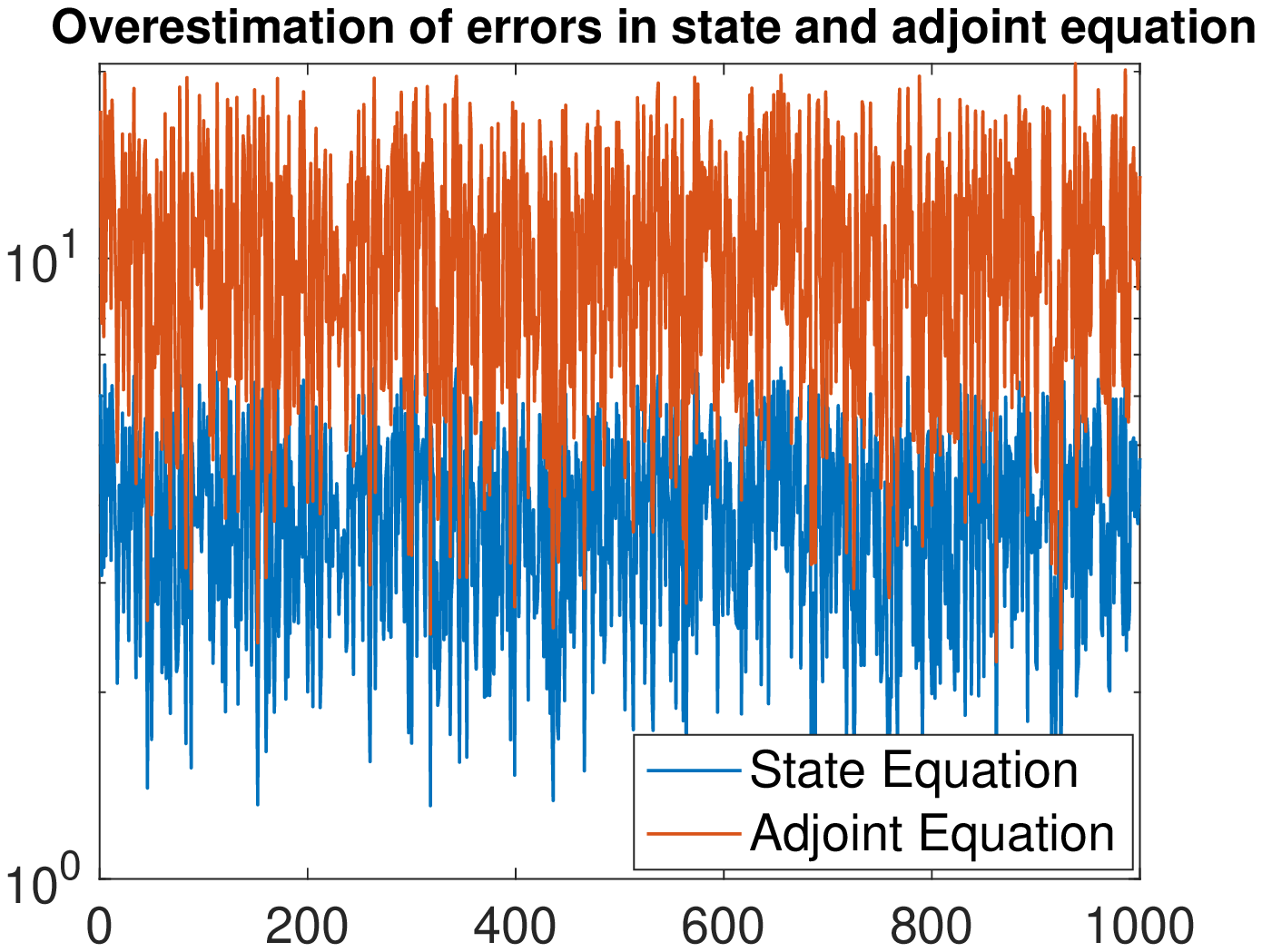}\\
				\textbf{(a) State and adjoint equation}
			}
			\parbox[b]{0.49\textwidth}{
				\centering 
				\includegraphics[width=0.425\textwidth]{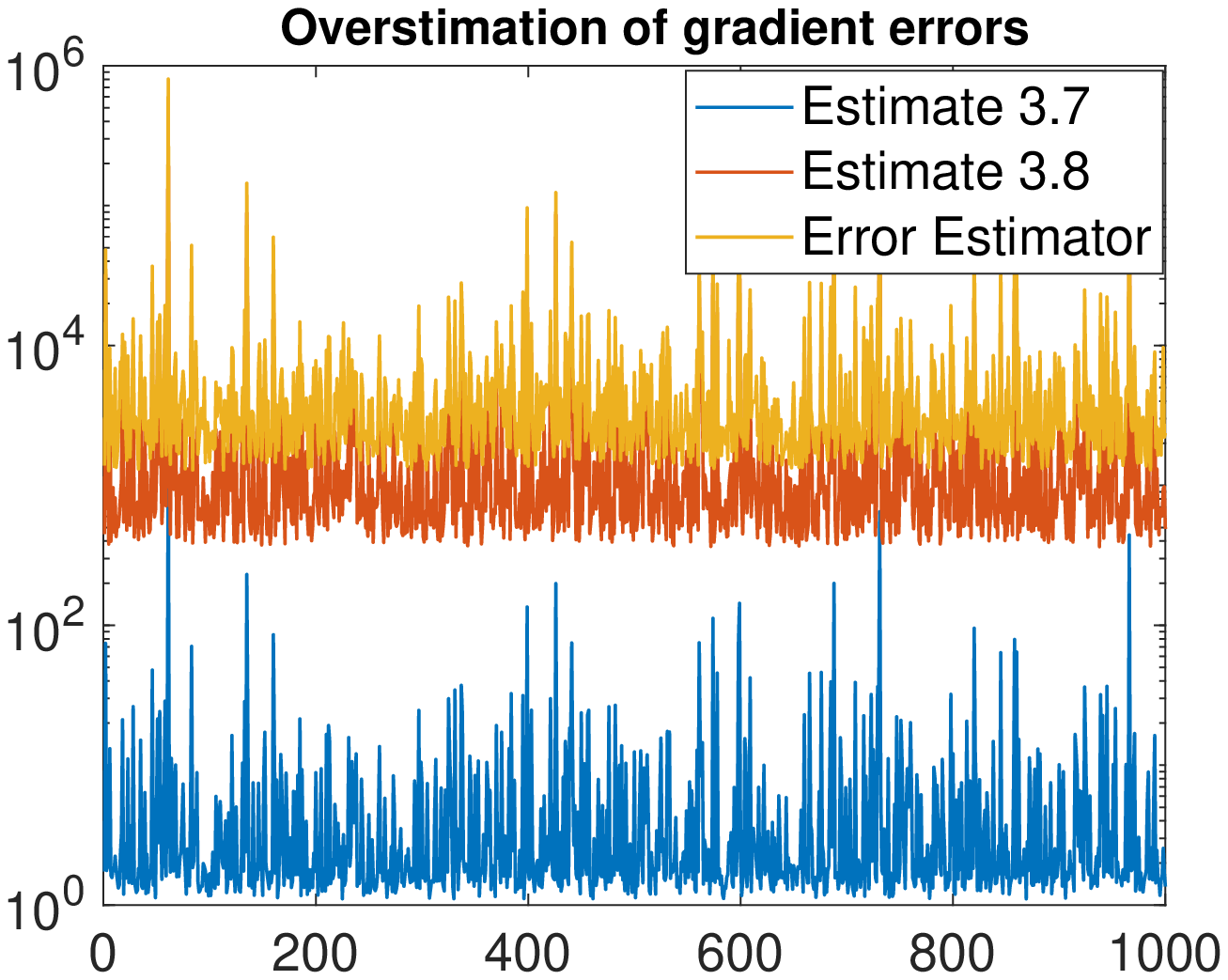}\\
				\textbf{(b) Gradient} 
			}
			\caption{Overestimations for $1000$ randomly selected parameter values}
			\label{p5fig:RBGeneration_Overestimation}
		\end{figure}	
	This is verified by the results shown in Figure \ref{p5fig:RBGeneration_Overestimation} (a), where the efficiency of the error estimator for both equations is shown for a given reduced basis for 1000 randomly chosen parameter values. However, the resulting efficiency of the error estimator for the error in the gradient is between $10^3$ and $10^6$ (see Figure \ref{p5fig:RBGeneration_Overestimation} (b)) and thus not well suited for a greedy procedure, which depends on a good error estimation. The huge overestimation of the error estimator is mainly due to the use of the triangle inequality \eqref{p5eq:GradientEstimateTriangleInequality} and the continuity estimates \eqref{p5eq:GradientEstimateFinal}, as can be seen in Figure \ref{p5fig:RBGeneration_Overestimation} (b). \\
	\begin{table}[h!]
	\caption{Number of basis functions for different error bounds}
\centering
\begin{tabular}{|l || c | c|} 
\hline
Error bound & Strong Greedy  & Error Estimate \\
 \hline
$\varepsilon = 1e-6$ & 24 & 56 \\
$\varepsilon = 1e-5$ & 20 & 50 \\
$\varepsilon = 1e-4$ & 16 & 40 \\
$\varepsilon = 1e-3$ & 12 & 32 \\
$\varepsilon = 1e-2$ & 12 & 26 \\
$\varepsilon = 1e-1$ & 10 & 20 \\
\hline
\end{tabular}
\vspace{0.2cm}
\label{p5tab:RBGenerationComparison}
\end{table}
	Compared to the strong greedy algorithm, we can see in Table \ref{p5tab:RBGenerationComparison} that this overestimation results in far more basis elements than actually needed to reach the given error bound. Since we want to investigate the influence of the error bounds in the estimate \eqref{p5eq:error_bounds} on the problem, we want that the estimate \eqref{p5eq:error_bounds} is satisfied sharply by the RB. Therefore, we will not use the error estimator to generate the basis, but instead use the strong greedy algorithm.

	\subsection{Application of the continuation methods to an MPOP}
	For the numerical investigation of the continuation method applied to a PDE-constrained multiobjective parameter optimization problem together with the use of the reduced basis method, we consider the MPOP
	\begin{align}
		 \left( \begin{array}{c}
		 J_1(u) \\
		 J_2(u) \\
		 J_3(u) \\
		 J_4(u)
\end{array}	\right)	  = \left( \begin{array}{c} \frac{1}{2} \left\Vert \mathcal{S}(u) - \mathcal{S}((0.7,0.8,0.5)) \right\Vert_H^2 \\
		\frac{1}{2} \left\Vert \mathcal{S}(u) - \mathcal{S}((2,0.5,0.5)) \right\Vert_H^2 \\
		\frac{1}{2} \left\Vert \mathcal{S}(u) - \mathcal{S}((3,-0.5,0.5)) \right\Vert_H^2 \\
		\frac{1}{2} \left\Vert u \right\Vert_{\mathbb{R}^{3}}^2
	\end{array} \right)  \label{p5eq:MPOPExampleNumerical}
	\end{align}
with $u = (\kappa,c,r)$ and 
\[
U_{ad} = \{ u = (\kappa,c,r) \in \mathbb{R}^3 \mid 0.5 \leq \kappa \leq 3, \; -1 \leq c \leq 1, \; r = 0.5 \},
\]
i.e., the reaction parameter $r$ is a constant so that we only optimize the diffusivity in the whole domain $\Omega$ and the strength and orientation of the advection field $b$. Thus, this can be seen as a problem with two parameters. 
 
As described before, the reduced basis is generated by the strong greedy Algorithm \ref{p5algo:GreedyAdjoint}, where the error bounds $\epsilon_1,\ldots,\epsilon_4$ are chosen in accordance with the estimate \eqref{p5eq:error_bounds}. As a reference, the exact solution of \eqref{p5eq:MPOPExampleNumerical} (via exact continuation and FEM discretization of the weak formulations) is shown in Figure \ref{p5fig:example_FEM_solution}.
	
	\begin{figure}[ht]
		\centering
		\includegraphics[width=0.4\textwidth]{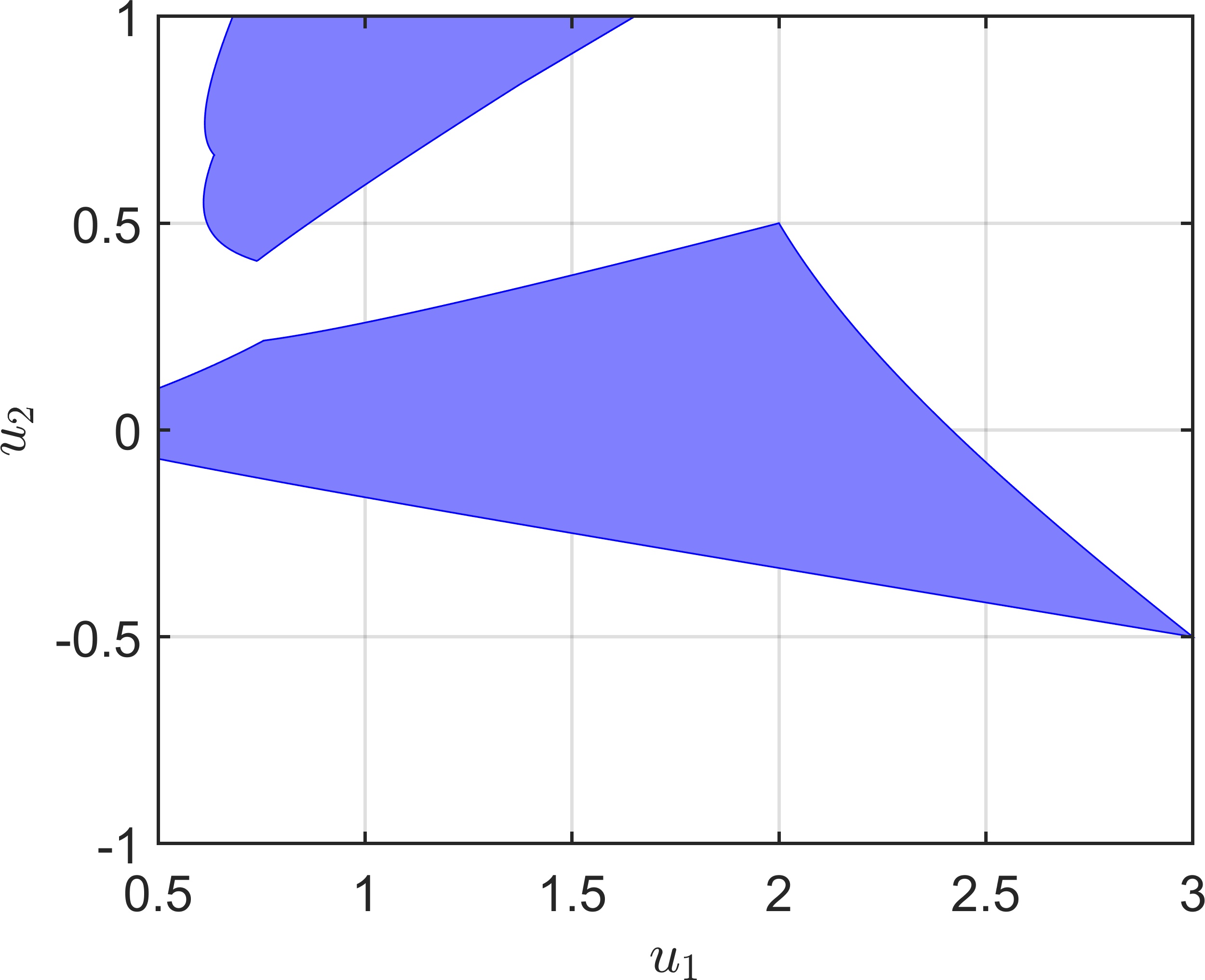}
		\caption{The Pareto critical set of \eqref{p5eq:MPOPExampleNumerical}}
		\label{p5fig:example_FEM_solution}
	\end{figure}
	
	\begin{remark}
		Since \eqref{p5eq:MPOPExampleNumerical} is constrained to a box, we have to use a constrained version of the exact continuation method (cf.~\cite{H2001}) to calculate Pareto critical points that lie on the boundaries of \eqref{p5eq:MPOPExampleNumerical}. But note that for this example, all Pareto critical points on the boundary are also Pareto critical if we ignore the constraints. In other words, for each Pareto critical point $\bar{u}$ on the boundary, there is a sequence of Pareto critical point in the interior that converges to $\bar{u}$. By continuity of $DJ$, the gradients of the (active) inequality constraints in the KKT conditions can be ignored. As a result, we can treat \eqref{p5eq:MPOPExampleNumerical} as an unconstrained problem that we only solve in a certain area. \hfill$\blacksquare$
	\end{remark}
	
	As a first test, we will compare the time needed to compute the exact solution of \eqref{p5eq:MPOPExampleNumerical} with the time needed for Strategy 1 and 2. For the error bounds we choose $\epsilon = (0.03, 0.03, 0.01, 0.01)$ and for the box radius we choose $r = \frac{3 - 0.5}{2^9} \approx 0.0049$. The results are shown in Figure \ref{p5fig:runtime_compare}.
	
	\begin{figure}[h!] 
		\parbox[b]{0.49\textwidth}{
			\centering 
			\includegraphics[width=0.425\textwidth]{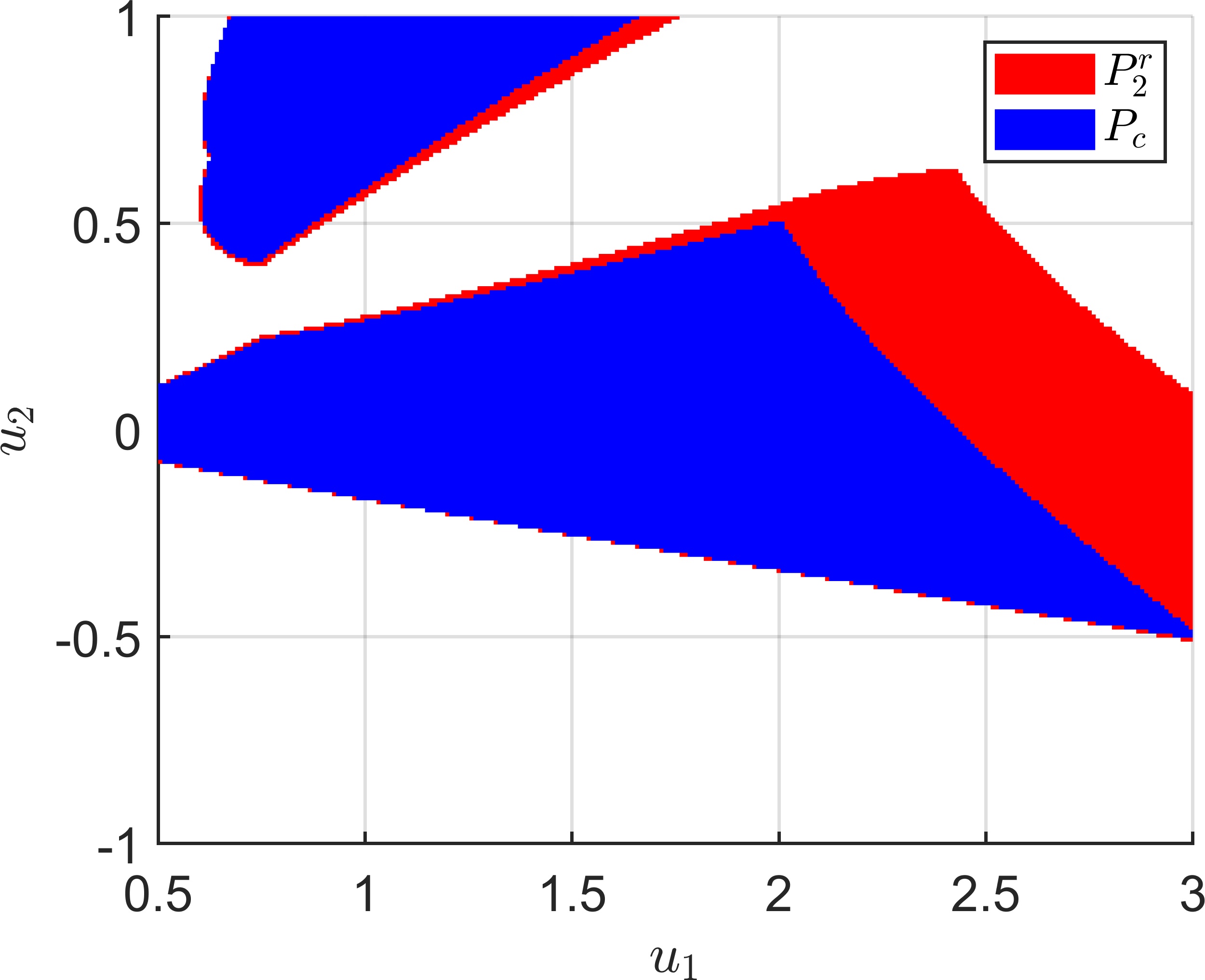}\\
		}
		\parbox[b]{0.49\textwidth}{
			\centering 
			\includegraphics[width=0.425\textwidth]{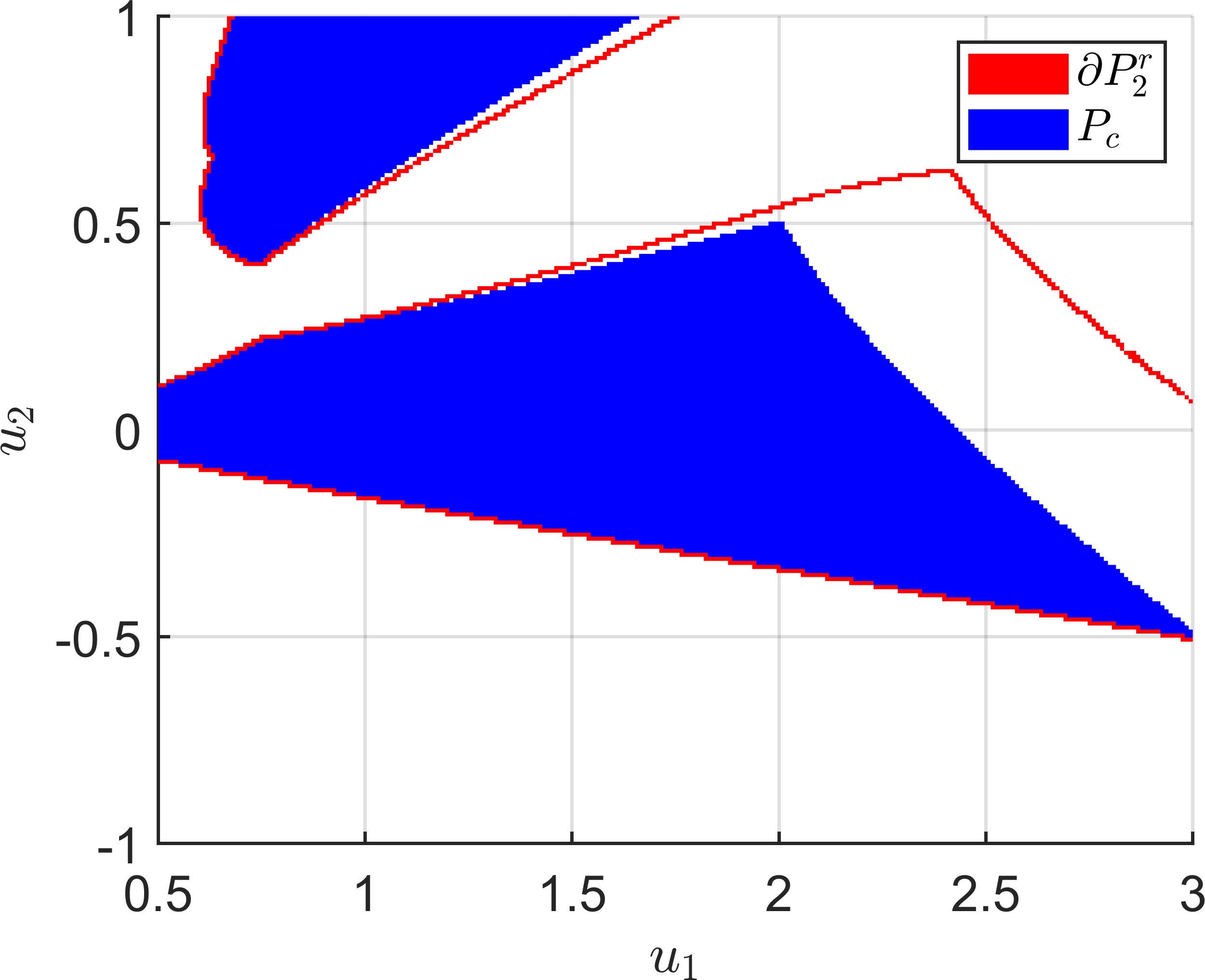}\\
		}
		\caption{Results of Strategy 1 (left) and 2 (right) for the MPOP \eqref{p5eq:MPOPExampleNumerical} with $\epsilon = (0.03,0.03,0.01,0.01)$}
		\label{p5fig:runtime_compare}
	\end{figure}	
	
	All three methods were implemented in Matlab. For the solution of the subproblems \eqref{p5eq:KKT_box_test}, \eqref{p5eq:KKTred_box_test} and \eqref{p5eq:bdryP2r_test}, the SQP-Algorithm of \verb+fmincon+ was used. (For increased stability during the continuation, each subproblem where the SQP-Algorithm found an optimal value larger than zero was restarted using the Interior-Point-Method and the Active-Set-Method of \verb+fmincon+). The runtime, number of boxes and number of subproblems needed are shown in Table \ref{p5tab:comparison}.
	\begin{table}[h]
	    \caption{Comparison of the performance of the exact continuation method, Strategy 1 and Strategy 2 for Example \eqref{p5eq:MPOPExampleNumerical}. The number of subproblems is split up in subproblems for the continuation and initialization (cf.~Section \ref{p5sec:globalization})}
		\begin{tabular}{ | l | l | l | l |}
			\hline
   			Algorithm & \# Boxes & \# Subproblems  & Runtime (in seconds) \\ \hline
			Exact cont. & $15916$ & $18721 + 25$  & $17501 s$ \\ \hline
			Strategy 1 & $21750$ & $24490 + 25$ & $1426 s$ \\ \hline
			Strategy 2 & $899$ & $1027 + 225$ & $276 s$ \\
    		\hline
    	\end{tabular}
    	\label{p5tab:comparison}
	\end{table}
	When comparing Strategy 1 and Strategy 2, we see that Strategy 2 needs about $20$ times fewer boxes and solutions of subproblems than Strategy 1. This is to be expected, since Strategy 2 only computes a covering of the boundary of $P_2^r$, i.e., of a lower dimensional set. When comparing the actual runtime, Strategy 2 is about $5$ times faster than Strategy 1, since the subproblems in Strategy 2 are more expensive to solve than the ones in Strategy 1 (cf.~Remark \ref{p5rem:S2_subproblem}). Finally, Strategy 2 is about $63$ times faster than the exact continuation method with FEM discretization, illustrating the large increase in efficiency we gain from our approach.
	
	Although it is a lot quicker to use inexact gradients from ROM instead of the exact gradients via FEM, it is important to keep in mind that our methods are computing a superset of the actual Pareto critical set. For example, in Figure \ref{p5fig:runtime_compare}, the right side of the lower connected component is only approximated poorly by $P_2^r$. Therefore, we will now investigate the influence of the error bounds $\epsilon = (\epsilon_1,\epsilon_2,\epsilon_3,\epsilon_4)$ on $P_2^r$, by applying Strategy 2 with reduced bases for different values of $\epsilon$. Note that in all our tests we set $\epsilon_4 = 0.01$, although the error in the gradient of the fourth cost function is zero for all parameters. This is done to make the solution of \eqref{p5eq:bdryP2r_test} in line 7 of Algorithm \ref{p5algo:boxcon_boundary} numerically stable (cf.~Remark \ref{p5rem:S2_subproblem}).\\
	The results of our experiment can be seen in Figure \ref{p5fig:Strategy2DifferentEpsilon}. Generally, as expected, the boundary $\partial P_2^r$ encloses the Pareto critical set $P_c$ sharper and sharper for decreasing $\epsilon$. Moreover, we observe that it is crucial to choose an $\epsilon$ which is not too large: For the value $\epsilon = (0.1,0.1,0.1,0.01)$ the shape of the boundary $\partial P_2^r$ implies that the set $P_2^r$ is connected, i.e., we lose the topological information that the Pareto critical set actually consists of two connected components. Decreasing $\epsilon$ to $\epsilon = (0.0885,0.0885,0.0885,0.01)$ we are in the limit case in which the boundary $\partial P_2^r$ touches the box constraints at around $(2.3,1)$, so that this is the approximate $\epsilon$ for which we regain the basic topological information of a disconnected Pareto critical set. \\ 
If we compare the results for $\epsilon = (0.03,0.03,0.03,0.01)$, $\epsilon = (0.03,0.03,0.01,0.01)$ and $\epsilon = (0.03,0.01,0.01,0.01)$, the influence of changing one component of $\epsilon$ becomes obvious. For $\epsilon = (0.03,0.03,0.03,0.01)$ the set $\partial P_2^r$ encloses the set $P_c$ quite sharply at the upper connected component and at the left part of the lower connected component, where the second and third component of the corresponding KKT-multipliers $\alpha$ are small. On the other hand, in the right part of the lower connected component of $P_c$, where the second and third component of the corresponding KKT-multipliers are relatively large, the deviation of $\partial P_2^r$ to $P_c$ is still large. Consequently, first reducing $\epsilon_3$ and then also $\epsilon_2$ from 0.03 to 0.01 leads to a clearly visible sharper enclosing of this part of $P_c$.  
	
	\begin{figure}[h!] 
			\parbox[b]{0.32\textwidth}{
				\includegraphics[width=0.32\textwidth]{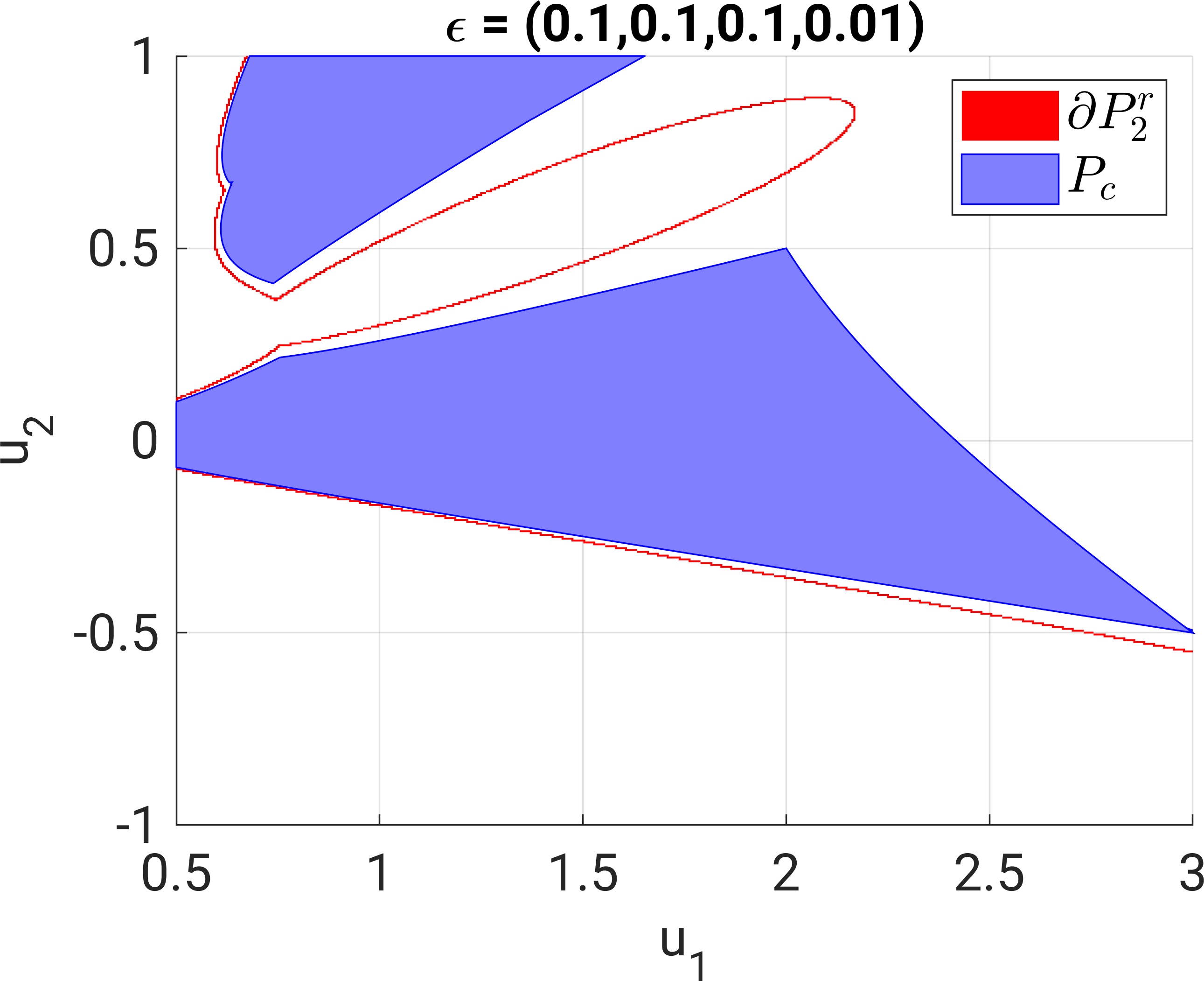}
			}
			\parbox[b]{0.32\textwidth}{

				\includegraphics[width=0.32\textwidth]{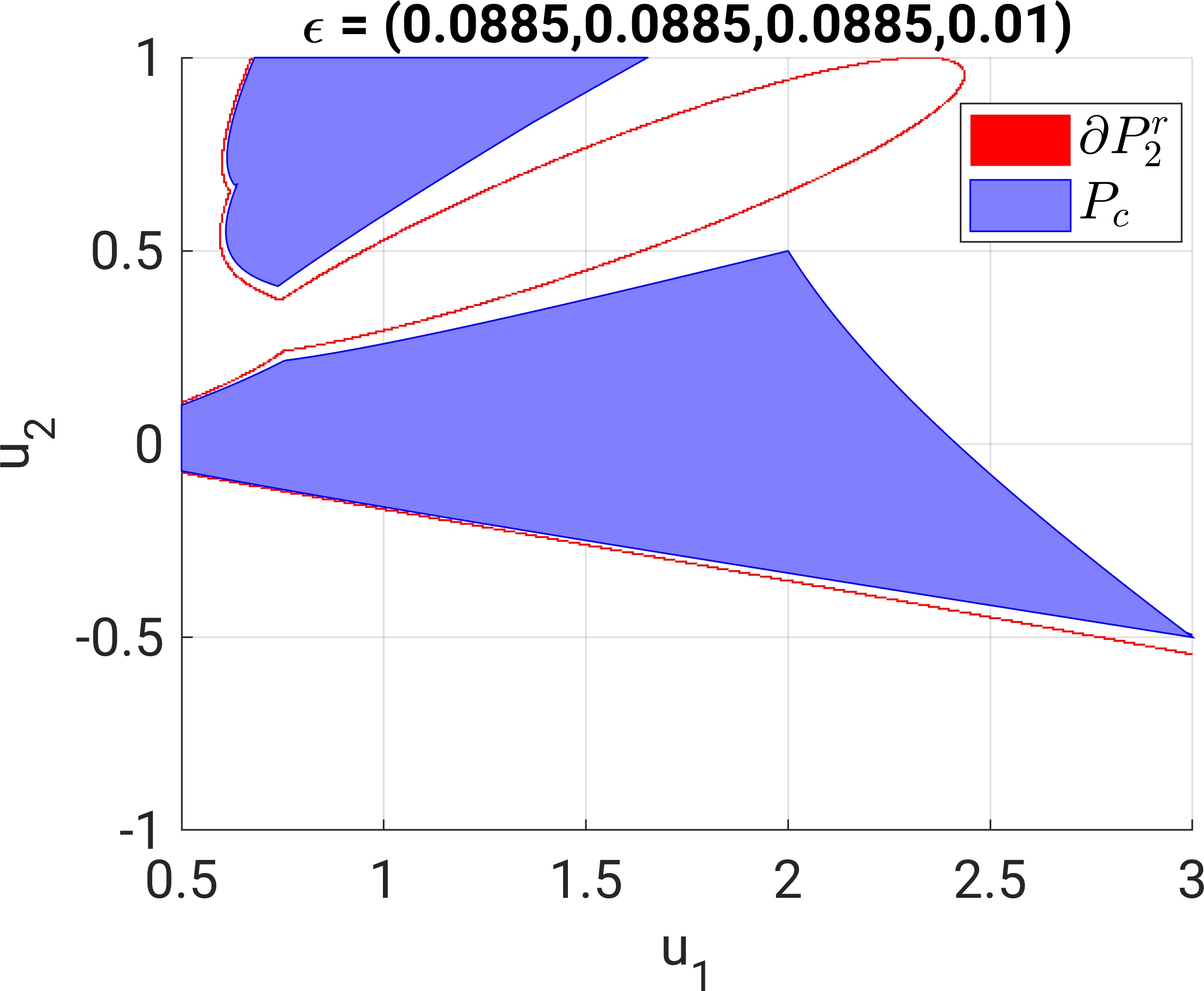}
			}
			\parbox[b]{0.32\textwidth}{

				\includegraphics[width=0.32\textwidth]{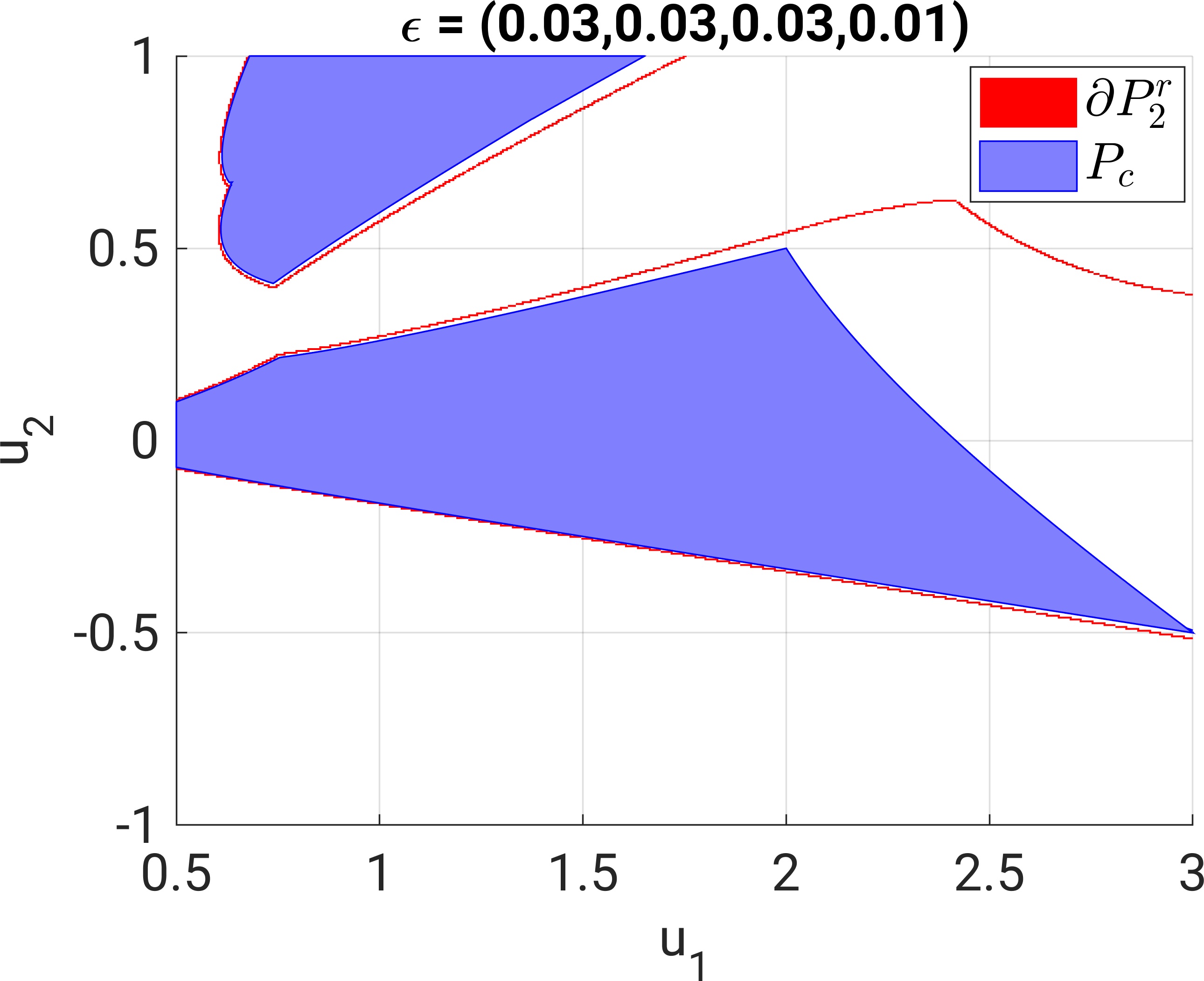}
			}
			\parbox[b]{0.32\textwidth}{

				\includegraphics[width=0.32\textwidth]{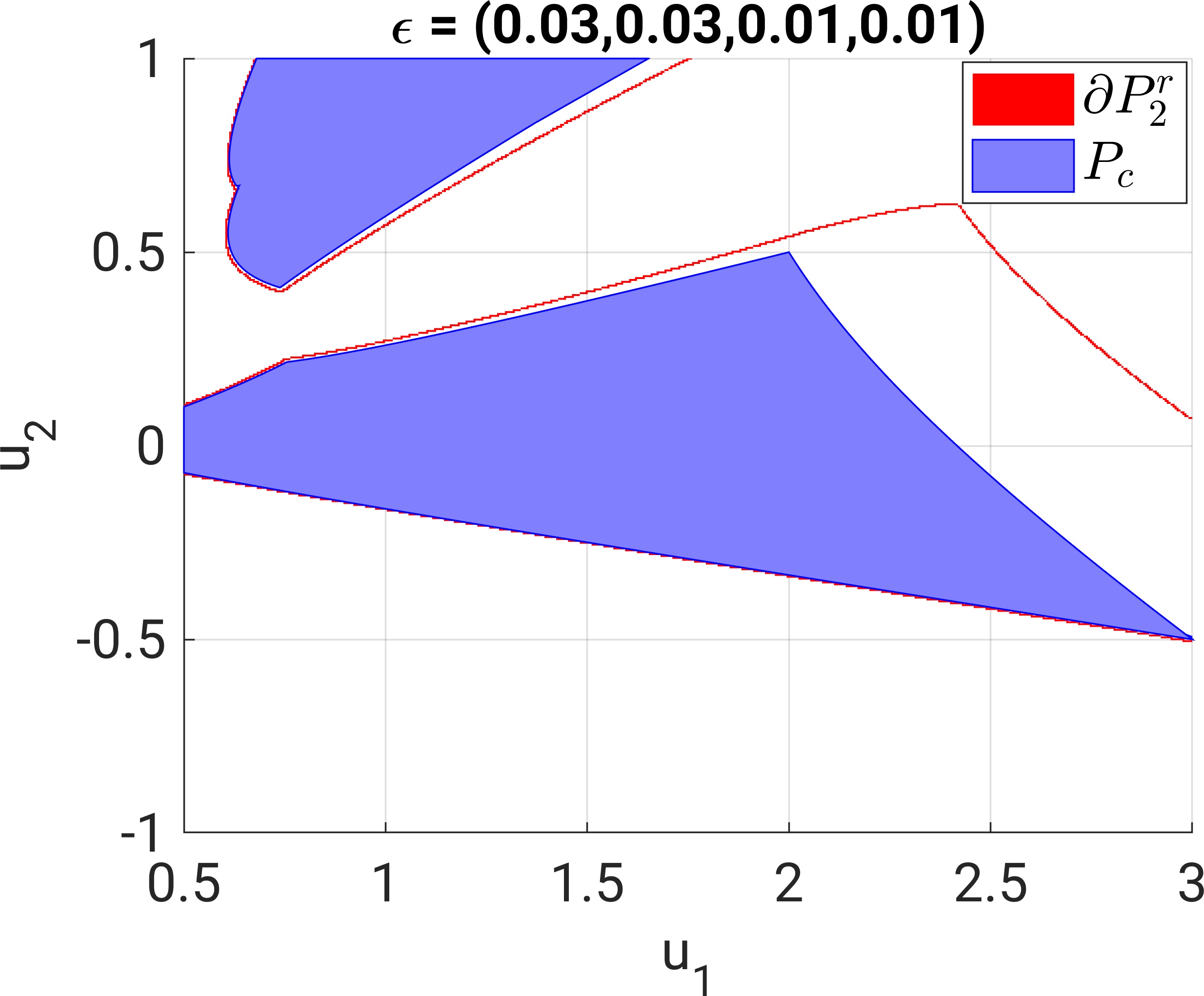}
			}
			\parbox[b]{0.32\textwidth}{

				\includegraphics[width=0.32\textwidth]{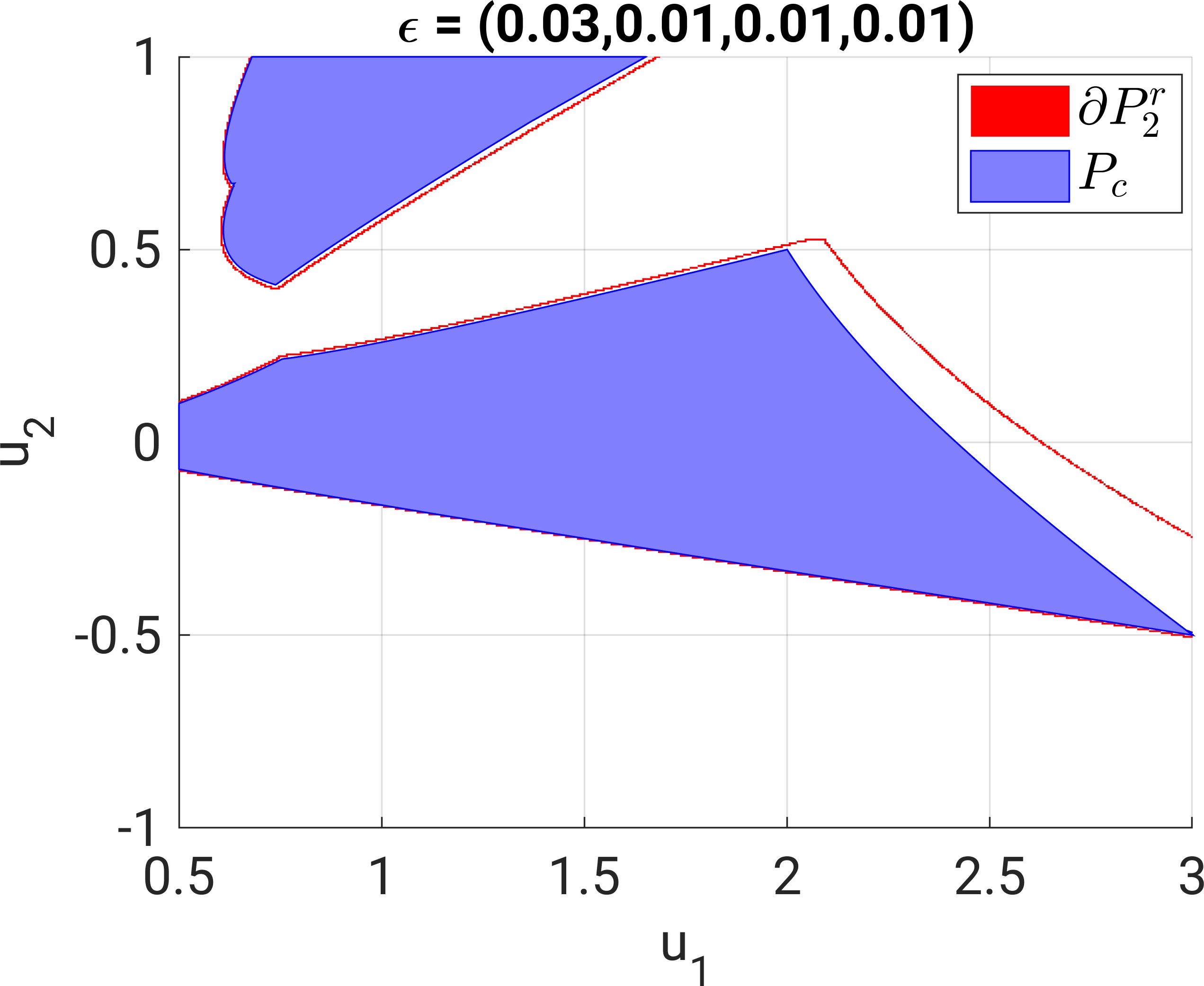}
			}
			\caption{Results of Strategy 2 for different values of $\epsilon$}
			\label{p5fig:Strategy2DifferentEpsilon}
		\end{figure}

	\section{Conclusion and outlook}		
	In this article, we present a way to efficiently solve multiobjective parameter optimization problems of elliptic PDEs by combining the reduced basis method from PDE-constrained optimization with the continuation approach from multiobjective optimization, which computes a box covering of the Pareto critical set. Using the RB method in this setting introduces an error in the objective functions and their gradients that has to be considered when solving the MPOP. To this end, we require that the reduced basis guarantees error bounds for the gradients of the objective functions. These error bounds are then incorporated into the KKT optimality conditions for MOPs to derive a tight superset $P_2^r$ of the actual Pareto (critical) set. This superset can be computed using a straightforward modification of the continuation method for MOPs (Strategy 1). Since $P_2^r$ has the same dimensions as the variable space of the MOP, we afterwards present a second method that only computes the boundary $\partial P_2^r$ of $P_2^r$ (Strategy 2). We do this by showing that $\partial P_2^r$ can be written as the level set of a differentiable mapping, which again enables the use of a continuation approach to compute it. For constructing the reduced basis, we use a greedy procedure which incorporates, and thus ensures, the error bounds for the gradients of the objective functions. \\
Our numerical tests show that the presented a-posteriori error estimator for the error in the gradients is not well-suited for the application in a greedy procedure due to its bad efficiency. Therefore, a strong greedy algorithm is used to build the reduced basis. Concerning the solution of the MPOP we investigate two aspects: First, the runtimes of our methods are compared. In our case, Strategy 1 is about 13 times and Strategy 2 about $63$ times faster than the exact solution of the MPOP (via the classical continuation method with FEM discretization). Second, the influence of the error bound for the gradients of the objective functions is investigated. As expected, a smaller error bound leads to a tighter covering of the Pareto critical set. Moreover, we observe that single components of the error bound strongly influence the tightness of the covering in areas, in which the corresponding components of the KKT-multipliers are large. Thus, by individually adapting the single components of the error bound, we can nicely control the tightness of the covering.
	
	For future work, there are some theoretical and practical aspects that should be investigated further:
	\begin{itemize}
		\item As mentioned in Remark \ref{p5rem:S2_subproblem}, in certain situations there can be difficulties when solving the problem \eqref{p5eq:bdryP2r_test}. In these situations, specialized methods that take these difficulties into account should be developed and used instead of standard methods for constrained optimization.
		\item If the number of objectives of the MPOP is larger than the number of variables, it may be possible to combine our approaches in this article with the hierarchical decomposition of the Pareto critical set presented in \cite{GPD2019}.
		\item The development of a more efficient a-posteriori error estimator for the error in the gradients of the objective functions would allow to use it in the greedy procedure. In that way, the expensive strong greedy procedure would be avoided in the offline phase. One way to do so might be the application of localized RB methods, see e.g. \cite{Ohlberger2017}.
		\item As explained in the globalization approach in Section \ref{p5sec:globalization}, we have to use multiple initial points to ensure that we find all connected components of $P_2^r$ (and faces of $\partial P_2^r$). Due to the local nature of the continuation method, this approach can potentially be parallelized, increasing the efficiency of our methods even more.
		\item If a decision maker is present with a certain preference, it may be worth to steer our continuation method in a direction that results from that preference instead of approximating the complete Pareto set. For the case with exact gradients, this was done in \cite{SCM2019}.
	\end{itemize}
		
	\bibliography{literature}   
	\bibliographystyle{abbrv}

	\subsection*{Acknowledgment}
This research was funded by the DFG Priority Programme 1962 ``Non-smooth and Complementarity-based Distributed Parameter Systems''.

	\appendix
	
	\section{Proof of Theorem \ref{p5thm:varphi_differentiable}}\label{p5app:ProofPhiDiff}
	
	To prove Theorem \ref{p5thm:varphi_differentiable}, we first have to investigate some of the properties of the optimization problem \eqref{p5eq:P2_problem}. This problem is quadratic with linear equality and inequality constraints. We will first investigate the uniqueness of the solution in the following lemma.

\begin{lemma} \label{p5lem:alpha_unique_1}
	Let $u \in \varphi^{-1}(0)$ and let $\alpha^1$ and $\alpha^2$ be two solutions of \eqref{p5eq:P2_problem} with $\alpha^1 \neq \alpha^2$. Then $\omega(\alpha) = 0$ for all $\alpha \in span( \{ \alpha^1, \alpha^2 \} )$ and 
	\begin{equation} \label{p5eq:unique_1}
	span( \{ \alpha^1, \alpha^2 \} ) \cap ker(DJ(u)^\top) \neq \emptyset.
	\end{equation}				
\end{lemma}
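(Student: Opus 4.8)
The plan is to work throughout with the symmetric matrix $M := DJ^r(u)DJ^r(u)^\top - \epsilon\epsilon^\top$, so that $\omega(\alpha)=\alpha^\top M\alpha$, and to exploit two things at once: that $\alpha^1,\alpha^2$ are \emph{global} minimizers of $\omega$ on the simplex with optimal value $0$, and that $M$ has the special ``Gram minus nonnegative rank-one'' structure inherited from \eqref{p5eq:P2_problem}. First I would record that, since $u\in\varphi^{-1}(0)$, we have $\omega\ge 0$ on $\Delta_k$ and $\omega(\alpha^1)=\omega(\alpha^2)=0$; the latter is equivalent to $\|DJ^r(u)^\top\alpha^j\|_2 = (\alpha^j)^\top\epsilon$ for $j=1,2$, where I use that $\alpha^j\in\Delta_k$ and $\epsilon\ge 0$ force $(\alpha^j)^\top\epsilon\ge 0$ so that taking square roots is legitimate.

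For the first assertion, the key observation is that, because $\omega(\alpha^1)=\omega(\alpha^2)=0$, for arbitrary $a,b\in\mathbb{R}$ one has $\omega(a\alpha^1+b\alpha^2)=2ab\,(\alpha^1)^\top M\alpha^2$ (using symmetry of $M$). Hence $\omega$ vanishes on $\mathrm{span}(\{\alpha^1,\alpha^2\})$ if and only if the cross term $(\alpha^1)^\top M\alpha^2$ is zero, and I would obtain this by squeezing it between two bounds. The lower bound $(\alpha^1)^\top M\alpha^2\ge 0$ comes from restricting $\omega$ to the segment $t\mapsto(1-t)\alpha^1+t\alpha^2\in\Delta_k$, on which $\omega$ equals $2t(1-t)(\alpha^1)^\top M\alpha^2$ and must be nonnegative for $t\in[0,1]$. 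The upper bound $(\alpha^1)^\top M\alpha^2\le 0$ is exactly where the structure of $M$ enters: writing $(\alpha^1)^\top M\alpha^2 = \langle DJ^r(u)^\top\alpha^1, DJ^r(u)^\top\alpha^2\rangle - ((\alpha^1)^\top\epsilon)((\alpha^2)^\top\epsilon)$ and applying Cauchy--Schwarz together with the norm identities above yields $\langle DJ^r(u)^\top\alpha^1, DJ^r(u)^\top\alpha^2\rangle \le \|DJ^r(u)^\top\alpha^1\|_2\,\|DJ^r(u)^\top\alpha^2\|_2 = ((\alpha^1)^\top\epsilon)((\alpha^2)^\top\epsilon)$. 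Combining the two bounds gives $(\alpha^1)^\top M\alpha^2 = 0$, hence $\omega\equiv 0$ on the span.

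For \eqref{p5eq:unique_1} I would extract the extra information that the Cauchy--Schwarz inequality above holds with \emph{equality}, which forces $DJ^r(u)^\top\alpha^1$ and $DJ^r(u)^\top\alpha^2$ to be linearly dependent. If one of them vanishes, the corresponding $\alpha^j$ already lies in $\ker(DJ^r(u)^\top)$; otherwise both $(\alpha^j)^\top\epsilon$ are positive and the two vectors are positive multiples of one another, say $DJ^r(u)^\top\alpha^2 = \lambda\,DJ^r(u)^\top\alpha^1$ with $\lambda=(\alpha^2)^\top\epsilon/(\alpha^1)^\top\epsilon>0$, so that $\alpha^2-\lambda\alpha^1\in\ker(DJ^r(u)^\top)$. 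In either case the resulting vector lies in $\mathrm{span}(\{\alpha^1,\alpha^2\})$ and is nonzero: $\alpha^2=\lambda\alpha^1$ together with $\alpha^1,\alpha^2\in\Delta_k$ would force $\lambda=1$ and $\alpha^1=\alpha^2$, contradicting $\alpha^1\ne\alpha^2$.

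The main obstacle is the upper bound on the cross term. The statement is genuinely false for a general symmetric $M$ — two distinct minimizers of value $0$ can perfectly well have a strictly positive cross term — so the proof must use that $M$ is a Gram matrix minus a nonnegative rank-one term, which is precisely what converts the cross-term estimate into an instance of Cauchy--Schwarz with a sharp equality case. Recognizing that optimality on the simplex supplies only one of the two inequalities, and that the geometric structure of $M$ supplies the reverse one (and with it the linear-dependence information needed for \eqref{p5eq:unique_1}), is the crux; the remaining steps are elementary. The only delicate point is the degenerate situation $(\alpha^j)^\top\epsilon=0$, which can occur when some $\epsilon_i=0$ and is handled by the vanishing case above.
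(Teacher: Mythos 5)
Your proof is correct and follows essentially the same route as the paper's: both expand $\omega$ on $\mathrm{span}(\{\alpha^1,\alpha^2\})$, reduce everything to the cross term $(\alpha^1)^\top\bigl(DJ^r(u)DJ^r(u)^\top-\epsilon\epsilon^\top\bigr)\alpha^2$, and combine nonnegativity of $\omega$ on the segment between $\alpha^1$ and $\alpha^2$ with Cauchy--Schwarz (which the paper phrases via the angle condition $\cos(\sphericalangle)\le 1$) to force the cross term to vanish and $DJ^r(u)^\top\alpha^1$, $DJ^r(u)^\top\alpha^2$ to be linearly dependent. Your handling of the degenerate case $(\alpha^j)^\top\epsilon=0$ and your explicit check that the resulting kernel element is nonzero are slightly more careful than the paper's, but the underlying argument is identical.
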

\begin{proof}
	For $c_1, c_2 \in \mathbb{R} \setminus \{ 0 \}$ we have
	\begin{align*}
		&\omega(c_1 \alpha^1 + c_2 \alpha^2) \\
		&= (c_1 \alpha^1 + c_2 \alpha^2)^\top (DJ^r(u) DJ^r(u)^\top - \epsilon \epsilon^\top) (c_1 \alpha^1 + c_2 \alpha^2) \\
		&= c_1^2 \omega(\alpha^1) + 2 ({c_1 \alpha^1}^\top DJ^r(u) DJ^r(u)^\top c_2 \alpha^2 - {c_1 \alpha^1}^\top \epsilon \epsilon^\top c_2 \alpha^2) + c_2^2 \omega(\alpha^2) \\
		&= 2 ({c_1 \alpha^1}^\top DJ^r(u) DJ^r(u)^\top c_2 \alpha^2 - {c_1 \alpha^1}^\top \epsilon \epsilon^\top c_2 \alpha^2) \\
		&= 2 c_1 c_2 ( (DJ^r(u)^\top \alpha^1)^\top (DJ^r(u)^\top \alpha^2) - (\epsilon^\top \alpha^1) (\epsilon^\top \alpha^2) ).
	\end{align*}
	From $\omega(\alpha^1) = \omega(\alpha^2) = 0$ it follows that $\epsilon^\top \alpha^1 = \| DJ^r(u)^\top \alpha^1 \|$ and $\epsilon^\top \alpha^2 = \| DJ^r(u)^\top \alpha^2 \|$. Let $\sphericalangle$ be the angle between $DJ^r(u)^\top \alpha^1$ and $DJ^r(u)^\top \alpha^2$. Then
	\begin{align} \label{p5eq:omega_add}
		&\omega(c_1 \alpha^1 + c_2 \alpha^2) \nonumber \\
		&= 2 c_1 c_2 (cos(\sphericalangle) \| DJ^r(u)^\top \alpha^1 \| \| DJ^r(u)^\top \alpha^2 \| - \| DJ^r(u)^\top \alpha^1 \| \| DJ^r(u)^\top \alpha^2 \|) \nonumber \\
		&= 2 c_1 c_2 (cos(\sphericalangle) - 1) \| DJ^r(u)^\top \alpha^1 \| \| DJ^r(u)^\top \alpha^2 \|. 
	\end{align}
	Assume $cos(\sphericalangle) \neq 1$ (i.e., $cos(\sphericalangle) - 1 < 0$), $\| DJ^r(u)^\top \alpha^1 \| \neq 0$ and $\| DJ^r(u)^\top \alpha^2 \| \neq 0$. If we choose $c_1 = t$ and $c_2 = 1 - t$ for $t \in (0,1)$, then $t \alpha^1 + (1-t) \alpha^2 \in \Delta_k$ and  $\omega(t \alpha^1 + (1-t) \alpha^2) < 0$, which contradicts $u \in \varphi^{-1}(0)$. If $\| DJ^r(u)^\top \alpha^1 \| = 0$ or $\| DJ^r(u)^\top \alpha^2 \| = 0$ then \eqref{p5eq:unique_1} holds for $\bar{\alpha} = \alpha^1$ or $\bar{\alpha} = \alpha^2$, respectively. If $cos(\sphericalangle) - 1 = 0$ then $DJ^r(u)^\top \alpha^1$ and $DJ^r(u)^\top \alpha^2$ are linearly dependent, so there are $\bar{c}_1$, $\bar{c}_2 \in \mathbb{R} \setminus \{ 0 \}$ such that $DJ^r(u)^\top \bar{\alpha} = 0$ for $\bar{\alpha} = \bar{c}_1 \alpha^1 + \bar{c}_2 \alpha^2$. In particular, in any case we must have $\omega(\alpha) = 0$ for all $\alpha \in span(\{ \alpha^1, \alpha^2  \})$.  
\end{proof}		

The previous lemma implies that for $k = 2$, the solution of \eqref{p5eq:P2_problem} for $u \in \varphi^{-1}(0)$ is non-unique iff $DJ^r(u) DJ^r(u)^\top - \epsilon \epsilon^\top = 0$. For $k > 2$, we can only have non-uniqueness if \eqref{p5eq:unique_1} holds. If we consider the dimensions of the spaces in \eqref{p5eq:unique_1}, we see that in the generic case, it can only hold if 
\begin{align*}
& dim(span( \{ \alpha^1, \alpha^2 \} ) \cap ker(DJ(u)^\top)) \geq 1 \\
\Leftrightarrow \ & 2 + k - rk(DJ(u)^\top) - k \geq 1 \\
\Leftrightarrow \ & rk(DJ(u)^\top) \leq 1,
\end{align*}
i.e., if all gradients of the objectives are linearly dependent in $u$. This motivates us to assume that in general, the solution of \eqref{p5eq:P2_problem} is unique for almost all $u \in \varphi^{-1}(0)$.

We will now investigate the differentiability of $\varphi$. Our strategy is to apply the implicit function theorem to the KKT conditions of \eqref{p5eq:P2_problem} to obtain a differentiable function $\phi$ that maps a point $u \in \mathbb{R}^n$ onto the solution of \eqref{p5eq:P2_problem} in $u$. This would imply the differentiability of $\varphi$ via concatenation with $\omega$. An obvious problem here is the fact that \eqref{p5eq:P2_problem} has inequality constraints which, when activated or deactivated under variation of $u$, lead to non-differentiabilities in $\phi$. Note that an inequality constraint being active means that one component of $\alpha$ is zero, i.e., one of the objective functions has no impact on the current problem. Thus, for our theoretical purposes, if there is an active inequality constraint in \eqref{p5eq:P2_problem} we will just ignore the corresponding objective function. This approach is strongly related to the hierarchical decomposition of the Pareto critical set (cf.~\cite{GPD2019}). 

For the reasons mentioned above, we will now consider the case where the solution of \eqref{p5eq:P2_problem} is strictly positive in each component. The following lemma shows a technical result that will be used in a later proof.
\begin{lemma} \label{p5lem:alpha_unique_2}
	Let $u \in \varphi^{-1}(0)$ and let $\bar{\alpha} \in \Delta_k$ be a solution of \eqref{p5eq:P2_problem} with $\alpha_i > 0$ $\forall i \in \{1,...,k\}$. Then $\bar{\alpha}$ is unique if and only if there is no $\beta \in \mathbb{R}^k \setminus \{ 0 \}$ with $\omega(\beta) = 0$ and $\sum_{i = 1}^k \beta_i = 0$.
\end{lemma}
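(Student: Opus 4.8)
The plan is to exploit that $\bar\alpha$ lies in the relative interior of $\Delta_k$, so that near $\bar\alpha$ the problem \eqref{p5eq:P2_problem} reduces to the equality-constrained quadratic program of minimizing $\omega(\alpha) = \alpha^\top M \alpha$, with $M := DJ^r(u) DJ^r(u)^\top - \epsilon\epsilon^\top$ symmetric, over the affine hyperplane $H := \{\alpha \in \mathbb{R}^k : \sum_{i=1}^k \alpha_i = 1\}$ whose direction space is $H_0 := \{\beta \in \mathbb{R}^k : \sum_{i=1}^k \beta_i = 0\}$. First I would record the first-order optimality condition at $\bar\alpha$: since every inequality constraint $\alpha_i \geq 0$ is inactive (because $\bar\alpha_i > 0$), complementary slackness forces all associated multipliers to vanish, and stationarity collapses to $2 M\bar\alpha = \lambda \mathbf{1}$ for some $\lambda \in \mathbb{R}$, where $\mathbf{1} = (1,\dots,1)^\top$. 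Equivalently, $\bar\alpha^\top M \beta = 0$ for every $\beta \in H_0$. This identity is the workhorse of both implications, and establishing it cleanly is the only genuinely delicate point; everything else is linear-algebraic bookkeeping.

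For the direction ``not unique $\Rightarrow$ such a $\beta$ exists'' (which yields the ``if'' part of the claim via its contrapositive), I would take a second solution $\tilde\alpha \in \Delta_k$ with $\tilde\alpha \neq \bar\alpha$ and set $\beta := \tilde\alpha - \bar\alpha \neq 0$. Since both lie in $H$, we have $\sum_{i=1}^k \beta_i = 0$, and since $\beta$ lies in the span of $\{\bar\alpha, \tilde\alpha\}$, Lemma \ref{p5lem:alpha_unique_1} yields $\omega(\beta) = 0$. Thus $\beta$ is a nonzero vector with the two required properties.

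For the converse ``such a $\beta$ exists $\Rightarrow$ not unique'' (yielding the ``only if'' part), I would take any $\beta \in \mathbb{R}^k \setminus \{0\}$ with $\omega(\beta) = 0$ and $\sum_{i=1}^k \beta_i = 0$, so that $\beta \in H_0$, and expand along the line $\bar\alpha + t\beta$:
\begin{equation*}
\omega(\bar\alpha + t\beta) = \omega(\bar\alpha) + 2t\,\bar\alpha^\top M \beta + t^2 \omega(\beta).
\end{equation*}
The constant term equals $\varphi(u) = 0$, the linear term vanishes by the stationarity identity above (as $\beta \in H_0$), and the quadratic term is $0$ by assumption, so $\omega(\bar\alpha + t\beta) = 0$ for all $t$. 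Because $\bar\alpha$ is in the relative interior of $\Delta_k$ and $\sum_i(\bar\alpha + t\beta)_i = 1$, the point $\bar\alpha + t\beta$ stays feasible for all sufficiently small $|t|$, is distinct from $\bar\alpha$ whenever $t \neq 0$, and attains the optimal value $0$; hence $\bar\alpha$ is not the unique solution. Combining the two contrapositives gives the asserted equivalence.

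The main obstacle, as noted, is the rigorous derivation of the interior stationarity condition $M\bar\alpha \in \mathrm{span}\{\mathbf{1}\}$. I would justify it either via the KKT conditions of \eqref{p5eq:P2_problem} together with the strict positivity of $\bar\alpha$, or more elementarily by observing that $\bar\alpha$ minimizes the smooth function $\omega$ over the set $\{\alpha \in H : \alpha_i > 0\ \forall i\}$, which is open in $H$ and contains $\bar\alpha$, so that the directional derivative of $\omega$ at $\bar\alpha$ in every $H_0$-direction must vanish. The remaining steps are purely algebraic and use the symmetry of $M$ only to pass between $\bar\alpha^\top M\beta$ and $\beta^\top M\bar\alpha$.
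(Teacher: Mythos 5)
Your proof is correct, and for the ``only if'' direction it takes a genuinely different route from the paper's. The forward implication (non-uniqueness yields a suitable $\beta$) is identical in both: set $\beta = \tilde{\alpha} - \bar{\alpha}$ and invoke Lemma \ref{p5lem:alpha_unique_1}. For the converse, the paper expands $\omega(\bar{\alpha} + s\beta)$ via the cosine identity \eqref{p5eq:omega_add} from the proof of Lemma \ref{p5lem:alpha_unique_1}, obtains $\omega(\bar{\alpha}+s\beta) = 2s(\cos(\sphericalangle)-1)\,\lVert DJ^r(u)^\top\bar{\alpha}\rVert\,\lVert DJ^r(u)^\top\beta\rVert \leq 0$, and then forces equality from $\varphi(u)=0$. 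You instead observe that, since $\bar{\alpha}$ is a minimizer lying in the relative interior of $\Delta_k$, first-order stationarity gives $M\bar{\alpha} \in \mathrm{span}\{(1,\dots,1)^\top\}$ for $M := DJ^r(u)DJ^r(u)^\top - \epsilon\epsilon^\top$, hence $\bar{\alpha}^\top M\beta = 0$ for every $\beta$ with $\sum_i\beta_i=0$, so the cross term in the quadratic expansion vanishes outright and $\omega(\bar{\alpha}+t\beta)=0$ identically in $t$. Your variant is slightly more robust: the identity \eqref{p5eq:omega_add} tacitly uses $\epsilon^\top\beta = \lVert DJ^r(u)^\top\beta\rVert$, whereas for a direction $\beta$ with $\sum_i\beta_i=0$ the condition $\omega(\beta)=0$ only yields $\lvert\epsilon^\top\beta\rvert = \lVert DJ^r(u)^\top\beta\rVert$, so the sign of the cross term may flip and one must pass to $-\beta$; your stationarity argument sidesteps this entirely. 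What the paper's approach buys in exchange is that it reuses machinery already established for Lemma \ref{p5lem:alpha_unique_1} and defers any discussion of the KKT system of \eqref{p5eq:P2_problem} to the proof of Theorem \ref{p5thm:varphi_differentiable}, where it is needed anyway.
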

\begin{proof}
	We will show that $\alpha$ is non-unique if and only if there is some $\beta \in \mathbb{R}^k$ with $\omega(\beta) = 0$ and $\sum_{i = 1}^k \beta_i = 0$. \\		
	$\Rightarrow$: Let $\tilde{\alpha}$ be another solution of \eqref{p5eq:P2_problem}. Then, as in the proof of Lemma \ref{p5lem:alpha_unique_1}, we must have $\omega(c_1 \bar{\alpha} + c_2 \tilde{\alpha}) = 0$ for all $c_1, c_2 \in \mathbb{R}$. This means we can choose $\beta = \bar{\alpha} - \tilde{\alpha}$. \\
	$\Leftarrow$: Let $\beta \in \mathbb{R}^k$ with $\omega(\beta) = 0$ and $\sum_{i = 1}^k \beta_i = 0$. Let $s > 0$ be small enough such that $\bar{\alpha} + s \beta \in \Delta_k$. Then, as in \eqref{p5eq:omega_add}, we have
	\begin{align*}
	\omega(\bar{\alpha} + s \beta) = 2 s (cos(\sphericalangle) - 1) \| DJ^r(u)^\top \bar{\alpha} \| \| DJ^r(u)^\top \beta \| \leq 0.
	\end{align*}
	Since by assumption $\varphi(u) = 0$ we must have $\omega(\bar{\alpha} + s \beta) = 0$, so $\bar{\alpha} + s \beta$ is another solution of \eqref{p5eq:P2_problem}.
\end{proof}			

To be able to use the KKT conditions of \eqref{p5eq:P2_problem} to obtain its solution, we have to make sure that these conditions are sufficient. Since \eqref{p5eq:P2_problem} is a quadratic problem, this means we have to show that the matrix in the objective $\omega$ is positive semidefinite.
\begin{lemma} \label{p5lem:P2_pos_semidef}
	Let $u \in \varphi^{-1}(0)$ and let $\bar{\alpha} \in \Delta_k$ be the unique solution of \eqref{p5eq:P2_problem} with $\bar{\alpha}_i > 0$ $\forall i \in \{1,...,k\}$. Then $\omega(\beta) \geq 0$ for all $\beta \in \mathbb{R}^k$. In particular, $DJ(u) DJ(u)^\top - \epsilon \epsilon^\top$ is positive semidefinite.
\end{lemma}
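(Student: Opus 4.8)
The plan is to treat $\omega$ as the quadratic form $\omega(\alpha) = \alpha^\top M \alpha$ with the symmetric matrix $M := DJ^r(u) DJ^r(u)^\top - \epsilon \epsilon^\top$, so that the asserted positive semidefiniteness of $M$ is exactly the claim $\omega(\beta) \geq 0$ for all $\beta \in \mathbb{R}^k$. The hypothesis supplies a great deal of structure: $\bar{\alpha}$ lies in the interior of $\Delta_k$ (all components strictly positive), it is a global minimizer of $\omega$ over $\Delta_k$, and the minimal value is $\omega(\bar{\alpha}) = \varphi(u) = 0$. I would exploit the first- and second-order optimality conditions at this interior minimizer and then decompose $\mathbb{R}^k$ using $\bar{\alpha}$ as a distinguished kernel direction.

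First I would extract the first-order information. Since $\bar{\alpha}_i > 0$ for all $i$, the inequality constraints are inactive, so the only binding constraint is $\sum_i \alpha_i = 1$, whose tangent space is $T := \{\beta \in \mathbb{R}^k : \sum_{i=1}^k \beta_i = 0\}$. For any $\beta \in T$ and all sufficiently small $t$ we have $\bar{\alpha} + t\beta \in \Delta_k$, hence $\omega(\bar{\alpha} + t\beta) \geq 0 = \omega(\bar{\alpha})$. Expanding the quadratic gives $2t\,\bar{\alpha}^\top M \beta + t^2 \beta^\top M \beta \geq 0$ for all small $t$ of either sign; letting $t \to 0^{\pm}$ forces $\bar{\alpha}^\top M\beta = 0$ for every $\beta \in T$, i.e.\ $M\bar{\alpha} \in T^\perp = \mathrm{span}((1,\ldots,1)^\top)$. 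Writing $M\bar{\alpha} = \mu (1,\ldots,1)^\top$ and pairing with $\bar{\alpha}$ yields $\mu = \bar{\alpha}^\top M\bar{\alpha} = \omega(\bar{\alpha}) = 0$, so in fact $M\bar{\alpha} = 0$. This is the step where the precise value $\varphi(u)=0$ is essential: it upgrades ``$M\bar{\alpha}$ parallel to $(1,\ldots,1)^\top$'' to ``$M\bar{\alpha} = 0$''.

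The same perturbation, now reading off the coefficient of $t^2$, is the second-order necessary condition and gives $\beta^\top M\beta \geq 0$ for all $\beta \in T$. To conclude for arbitrary vectors I would use the direct sum $\mathbb{R}^k = T \oplus \mathrm{span}(\bar{\alpha})$, which holds because $\sum_i \bar{\alpha}_i = 1 \neq 0$ places $\bar{\alpha}$ outside $T$. Writing a general $\beta = \beta_T + t\bar{\alpha}$ with $\beta_T \in T$ and expanding, the cross term $2t\,\beta_T^\top M\bar{\alpha}$ and the pure term $t^2 \bar{\alpha}^\top M \bar{\alpha}$ both vanish because $M\bar{\alpha} = 0$, leaving $\omega(\beta) = \beta_T^\top M \beta_T \geq 0$. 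Hence $M$ is positive semidefinite.

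I expect the main obstacle to be conceptual rather than computational: correctly invoking the interiority hypothesis so that the inequality constraints drop out and the relevant tangent space is the full hyperplane $T$, and recognizing that it is the vanishing of the optimal value (not merely stationarity) that pins $\bar{\alpha}$ into the kernel of $M$. The uniqueness hypothesis is not needed for semidefiniteness itself---it would only sharpen the conclusion to positive definiteness of $\omega$ on $T$ via Lemma \ref{p5lem:alpha_unique_2}---so I would be careful to use only what the statement requires.
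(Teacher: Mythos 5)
Your proof is correct, and it takes a genuinely different route from the paper's. The paper argues by contradiction: assuming $\omega(\beta) < 0$ for some $\beta$, it splits into the cases $\sum_i \beta_i = 0$ and $\sum_i \beta_i \neq 0$ and in each case uses the identity \eqref{p5eq:omega_add} (the $\cos(\sphericalangle)$ computation inherited from Lemma \ref{p5lem:alpha_unique_1}, which relies on $\omega$ having the specific form $\| DJ^r(u)^\top \alpha \|^2 - (\alpha^\top \epsilon)^2$) to exhibit a point of $\Delta_k$ with negative objective value, contradicting $\varphi(u) = 0$. You instead run the standard first- and second-order necessary conditions for the interior minimizer $\bar{\alpha}$ on the hyperplane $T$, observe that the vanishing optimal value upgrades ``$M\bar{\alpha} \in T^\perp$'' to ``$M\bar{\alpha} = 0$'', and then extend semidefiniteness from $T$ to all of $\mathbb{R}^k$ via the splitting $\mathbb{R}^k = T \oplus \mathrm{span}(\bar{\alpha})$. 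Every step checks out, including the decomposition $\beta = \beta_T + t\bar{\alpha}$ with $t = \sum_i \beta_i$. Your argument is more elementary and more general: it never uses the Gram/rank-one structure of $M$ or the nonnegativity of $\epsilon$, only that $\bar{\alpha}$ is an interior minimizer of a quadratic form over $\Delta_k$ with optimal value zero, and it isolates exactly which hypotheses do the work (interiority gives the full tangent hyperplane, $\varphi(u) = 0$ pins $\bar{\alpha}$ into $\ker M$; you are right that uniqueness is not needed, and in fact the paper's proof does not use it either). What the paper's route buys is economy within its chain of lemmas: the same $\cos(\sphericalangle)$ identity is the workhorse of Lemmas \ref{p5lem:alpha_unique_1} and \ref{p5lem:alpha_unique_2}, so all three proofs reuse one computation.
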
		
\begin{proof}
	Assume there is some $\beta \in \mathbb{R}^k$ with $\omega(\beta) < 0$, i.e., $\epsilon^\top \beta > \| DJ^r(u)^\top \beta \|$. We distinguish between two cases: \\
	Case 1: $\sum_{i = 1}^k \beta_i = 0$: Similar to the proof of Lemma \ref{p5lem:alpha_unique_1} we get
	\begin{align*}
	\omega(\bar{\alpha} + s \beta) &< 2 s ( (DJ^r(u)^\top \bar{\alpha})^\top (DJ^r(u)^\top \beta) - (\epsilon^\top \bar{\alpha}) (\epsilon^\top \beta) ) \\
	&< 2 s (cos(\sphericalangle) - 1) \| DJ^r(u)^\top \bar{\alpha} \| \| DJ^r(u)^\top \beta \| \leq 0
	\end{align*}
	for all $s > 0$. In particular, since $\bar{\alpha}$ is positive, there is some $\bar{s} > 0$ such that $\bar{\alpha} + \bar{s} \beta \in \Delta_k$ with $\omega(\bar{\alpha} + \bar{s} \beta) < 0$, which is a contradiction. \\
	Case 2: $\sum_{i = 1}^k \beta_i \neq 0$. W.l.o.g. assume that $\sum_{i = 1}^k \beta_i = 1$. Consider
	\begin{equation*}
	\bar{\omega} : \mathbb{R} \rightarrow \mathbb{R}, \quad s \mapsto \omega(\bar{\alpha} + s (\beta - \bar{\alpha})).
	\end{equation*}
	Then $\bar{\omega}(0) = 0$ and $\bar{\omega}(1) < 0$. By assumption we must have $\bar{\omega}(s) > 0$ for all $s$ such that $\bar{\alpha} + s (\beta - \bar{\alpha}) \in \Delta_k$. By continuity of $\bar{\omega}$ there must be some $s^*$ with $\bar{\omega}(s^*) = 0$. Let $\bar{\beta} := \bar{\alpha} + s^* (\beta - \bar{\alpha})$. Using \eqref{p5eq:omega_add} we get
	\begin{align*}
	\omega(\bar{\alpha} + t s^* (\beta - \bar{\alpha})) &= \omega((1 - t) \bar{\alpha} + t \bar{ \beta}) \\
	&= 2 t (1-t) (cos(\sphericalangle) - 1) \| DJ^r(u)^\top \bar{\alpha} \| \| DJ^r(u)^\top \bar{\beta} \| \leq 0
	\end{align*} 
	for all $t \in (0,1)$, which is a contradiction.
\end{proof}			

The previous results now allow us to prove Theorem \ref{p5thm:varphi_differentiable}.
\paragraph{Theorem \ref{p5thm:varphi_differentiable}}
	\textit{
		Let $\bar{u} \in \varphi^{-1}(0)$ such that \eqref{p5eq:P2_problem} has a unique solution $\bar{\alpha} \in \Delta_k$ with $\bar{\alpha}_i > 0$ for all $i \in \{1,...,k\}$. Let \eqref{p5eq:P2_problem} be uniquely solvable in a neighborhood of $\bar{u}$. Then there is an open set $U \subseteq \mathbb{R}^n$ with $\bar{u} \in U$ such that $\varphi|_U$ is continuously differentiable.	
	}
\begin{proof}
	The KKT conditions for \eqref{p5eq:P2_problem} are
	\begin{align} \label{p5eq:P2_KKT}
	(DJ(u) DJ(u)^\top - \epsilon \epsilon^\top) \alpha - 
	\begin{pmatrix}
	\lambda + \mu_1 \\
	\vdots \\
	\lambda + \mu_k
	\end{pmatrix}
	&= 0, \nonumber \\
	\sum_{i = 1}^k \alpha_i  - 1 &= 0, \nonumber \\
	\alpha_i &\geq 0 \ \forall i \in \{1,...,k\}, \\
	\mu_i &\geq 0 \ \forall i \in \{1,...,k\}, \nonumber \\
	\mu_i \alpha_i &= 0 \ \forall i \in \{1,...,k\}. \nonumber
	\end{align}
	for $\lambda \in \mathbb{R}$ and $\mu \in \mathbb{R}^k$. By Lemma \ref{p5lem:P2_pos_semidef} these conditions are sufficient for optimality. By our assumption there is an open set $U'$ with $\bar{u} \in U'$ such that the solution of \eqref{p5eq:P2_problem} is unique and positive. Thus, on $U'$, \eqref{p5eq:P2_KKT} is equivalent to
	\begin{align*}
	(DJ(u) DJ(u)^\top - \epsilon \epsilon^\top) \alpha - 
	\begin{pmatrix}
	\lambda \\
	\vdots \\
	\lambda 
	\end{pmatrix}
	&= 0, \\
	\sum_{i = 1}^k \alpha_i  - 1 &= 0.
	\end{align*}
	for some $\lambda \in \mathbb{R}$. This system can be rewritten as $G(u,(\alpha,\lambda)) = 0$ for
	\begin{equation*}
	G : \mathbb{R}^n \times \mathbb{R}^{k+1} \rightarrow \mathbb{R}^{k+1}, \quad (u,(\alpha,\lambda)) \mapsto 
	\begin{pmatrix}
	(DJ(u) DJ(u)^\top - \epsilon \epsilon^\top) \alpha - (\lambda,...,\lambda)^\top \\
	\sum_{i = 1}^k \alpha_i  - 1
	\end{pmatrix}.
	\end{equation*}
	Derivating $G$ with respect to $(\alpha,\lambda)$ yields
	\begin{equation*}
	D_{(\alpha,\lambda)} G(u,(\alpha,\lambda)) = 
	\begin{pmatrix}
	(DJ(u) DJ(u)^\top - \epsilon \epsilon^\top) & (-1,...,-1)^\top \\
	(1,...,1) & 0
	\end{pmatrix}
	\in \mathbb{R}^{(k+1) \times (k+1)}.
	\end{equation*}
	Let $\bar{\lambda} \in \mathbb{R}$ such that $G(\bar{u},(\bar{\alpha},\bar{\lambda})) = 0$. (Note that uniqueness of $\bar{\alpha}$ implies uniqueness of $\bar{\lambda}$ here.) For $D_{(\alpha,\lambda)} G(\bar{u},(\bar{\alpha},\bar{\lambda}))$ to be singular, there would have to be some $v = (v^1,v^2) \in \mathbb{R}^{k+1}$ with	
	\begin{align*}
	0 = D_{(\alpha,\lambda)} G(\bar{u},(\bar{\alpha},\bar{\lambda})) v = 
	\begin{pmatrix}
	(DJ(\bar{u}) DJ(\bar{u})^\top - \epsilon \epsilon^\top) v^1 - (v^2,...,v^2)^\top \\
	\sum_{i = 1}^k v_i^1
	\end{pmatrix}
	\end{align*}			
	and thus
	\begin{align*}
	0 &= {v^1}^\top (DJ(\bar{u}) DJ(\bar{u})^\top - \epsilon \epsilon^\top) v^1 - {v^1}^\top (v^2,...,v^2)^\top \\
	&= {v^1}^\top (DJ(\bar{u}) DJ(\bar{u})^\top - \epsilon \epsilon^\top) v^1 - v_2 \sum_{i = 1}^k v_i^1 \\
	&= w(v^1).
	\end{align*}
	By Lemma \ref{p5lem:alpha_unique_2}, this is a contradiction to the assumption that $\bar{\alpha}$ is a unique solution of \eqref{p5eq:P2_problem}. So $D_{(\alpha,\lambda)} G(\bar{u},(\bar{\alpha},\bar{\lambda}))$ has to be regular. This means we can apply the implicit function theorem to obtain open sets $U \subseteq U' \subseteq \mathbb{R}^n$, $V \subseteq \mathbb{R}^{k+1}$ with $\bar{u} \in U$, $(\bar{\alpha},\bar{\lambda}) \in V$ and a continuously differentiable function $\phi = (\phi_\alpha, \phi_\lambda): U \rightarrow V$ with 
	\begin{equation*}
	G(u,(\alpha,\lambda)) = 0 \ \Leftrightarrow \ (\alpha,\lambda) = \phi(u) \quad \forall u \in U,(\alpha,\lambda) \in V. 
	\end{equation*}
	In particular,
	\begin{equation} \label{p5eq:varphi_phi}
	\varphi|_U(u) = \min_{\alpha \in \Delta_k} \left( \| DJ(u)^\top \alpha \|^2 - (\alpha^\top \epsilon)^2 \right) = \| DJ(u)^\top \phi_\alpha(u) \|^2 - (\phi_\alpha(u)^\top \epsilon)^2,
	\end{equation}
	so $\varphi|_U$ is continuously differentiable.
\end{proof}		
	
\begin{remark}
	From the proof of Theorem \ref{p5thm:varphi_differentiable} we can even derive an explicit formula for the derivative of $\varphi|_U$ in $\bar{u}$: First of all, the derivative of the implicit function $\phi$ is given by
	\begin{align*}
		&D\phi(\bar{u}) \\
		&= - G_{(\alpha,\lambda)}(\bar{u},(\bar{\alpha},\bar{\lambda}))^{-1} G_u(\bar{u},(\bar{\alpha},\bar{\lambda})) \\
		&= -
		\begin{pmatrix}
		DJ(\bar{u}) DJ(\bar{u})^\top - \epsilon \epsilon^\top & -1_{k \times 1} \\
		1_{1 \times k} & 0
		\end{pmatrix}^{-1} \cdot \\
		& \hspace{60pt}  \left(		
		\begin{pmatrix}
		\bar{\alpha}^\top DJ(\bar{u}) \nabla^2 J_1(\bar{u}) \\
		\vdots \\
		\bar{\alpha}^\top DJ(\bar{u}) \nabla^2 J_k(\bar{u}) \\
		0_{1 \times n}
		\end{pmatrix}
		+
		\begin{pmatrix}
		DJ(\bar{u}) \sum_{i = 1}^k \bar{\alpha}_i \nabla^2 J_i(\bar{u}) \\
		0_{1 \times n}
		\end{pmatrix}
		\right).
	\end{align*}
	By applying the chain rule to \eqref{p5eq:varphi_phi} we obtain
	\begin{align} \label{p5eq:D_phi_expl}
	&D\varphi|_U(\bar{u}) \nonumber \\
	&= 2 (DJ(\bar{u})^\top \bar{\alpha})^\top \sum_{i = 1}^k \bar{\alpha}_i \nabla^2 J_i(\bar{u}) + \left( 2 (DJ(\bar{u})^\top \bar{\alpha})^\top DJ(\bar{u})^\top - 2 (\bar{\alpha}^\top \epsilon) \epsilon^\top \right) D\phi_\alpha(\bar{u}) \nonumber \\ 
	&= 2 (DJ(\bar{u})^\top \bar{\alpha})^\top \sum_{i = 1}^k \bar{\alpha}_i \nabla^2 J_i(\bar{u}) + 2 \bar{\alpha}^\top \left( DJ(\bar{u}) DJ(\bar{u})^\top -  \epsilon \epsilon^\top \right) D\phi_\alpha(\bar{u}) \nonumber \\
	&= 2 (DJ(\bar{u})^\top \bar{\alpha})^\top \sum_{i = 1}^k \bar{\alpha}_i \nabla^2 J_i(\bar{u}) + 2 (\bar{\lambda},...,\bar{\lambda}) D\phi_\alpha(\bar{u}).
	\end{align} \hfill$\blacksquare$
\end{remark}

\end{document}